\definecolor{webgreen}{rgb}{0,.5,0}
\definecolor{webbrown}{rgb}{.6,0,0}
\newcommand{\braces}{\genfrac{\lbrace}{\rbrace}{0pt}{}}
\begin{document}

\theoremstyle{plain}
\newtheorem{theorem}{Theorem}
\newtheorem{corollary}[theorem]{Corollary}
\newtheorem{proposition}{Proposition}
\newtheorem{lemma}{Lemma}
\newtheorem*{example}{Examples}
\newtheorem{remark}{Remark}

\numberwithin{equation}{section}

\begin{center}

\vskip 1cm{\large\bf  
New Polynomial Identities and Some Consequences}
\vskip 1cm
\large
Kunle Adegoke \\ 
Department of Physics and Engineering Physics\\
Obafemi Awolowo University\\
220005 Ile-Ife \\
Nigeria \\
\href{mailto:adegoke00@gmail.com }{\tt adegoke00@gmail.com}\\

\end{center}

\vskip .2 in

\begin{abstract}
Using an elementary approach involving the Euler Beta function and the binomial theorem, we derive two polynomial identities; one of which is a generalization of a known polynomial identity. Two well-known combinatorial identities, namely Frisch's identity and Klamkin's identity, appear as immediate consequences of these polynomial identities. We subsequently establish several combinatorial identities, including a generalization of each of Frisch's identity and Klamkin's identity. Finally, we develop a scheme for deriving combinatorial identities associated with polynomial identities of a certain type.
\end{abstract}

\noindent 2020 {\it Mathematics Subject Classification}: Primary 05A10; Secondary 05A19. 

\noindent \emph{Keywords: } Beta function, polynomial identity, Frisch identity, Klamkin identity, binomial coefficient, combinatorial identity, Stirling numbers, Dixon's identity.

\section{Introduction}
Our purpose in writing this paper is to derive the following presumably new polynomial identities:
\begin{equation}\label{poly1}
\sum_{k = 0}^n {\binom nk\binom{k + r}s^{-1}x^k}  = \sum_{k = 0}^n {\frac{(-1)^ks}{k + s}\binom nk\binom{{k + r}}{{k + s}}^{-1}x^k \left( {1 + x} \right)^{n - k} } 
\end{equation} 
and
\begin{equation}\label{poly2}
\sum_{k = 0}^n {\binom nk\binom r{k + s}^{-1} x^k}  =\sum_{k = 0}^n {\frac{(-1)^{n-k}(r + 1)}{r - k + 1}\binom nk\binom{r - k}{r - s - n}^{-1}(1-x)^{n - k} } .
\end{equation}
In identities~\eqref{poly1} and~\eqref{poly2}, $n$ is a non-negative integer and $x$ is a complex variable. Identity~\eqref{poly1} holds for complex numbers $r$ and $s$ for which $\Re(r-s+1)>0$ and $s$ is not a non-positive integer, while~\eqref{poly2} is valid for complex numbers $r$ and $s$ such that $\Re(r-n-s+1)>0$ and $s$ is not a negative integer.

Identity~\eqref{poly1} is simpler than, and yet, generalizes Identity (4.13) of Gould's book~\cite[p.47]{gould}, the latter corresponding to the case $r=s$ in~\eqref{poly1}.

At $x=-1$, identity~\eqref{poly1} reduces to Frisch's identity~\cite{frisch}, namely,
\begin{equation}\label{frisch}
\sum_{k = 0}^n {(-1)^k\binom nk\binom{k + r}s^{-1}}  = \frac s{n+s}\binom{n+r}{n+s}^{-1},
\end{equation} 
while at $x=1$, identity~\eqref{poly2} yields Klamkin's identity:
\begin{equation}\label{klamkin}
\sum_{k = 0}^n {\binom nk\binom r{k + s}^{-1}}  = \frac{{r + 1}}{{\left( {r - n + 1} \right)}}\binom{r - n}s^{-1}.
\end{equation}
In a recent paper, Gould and Quaintance~\cite{gould2} employed the well-known formula of Gauss for the hypergeometric function to give new proofs of~\eqref{frisch} and~\eqref{klamkin}. More recently, Abel~\cite{abel} used the Euler Beta function to give elementary short proofs of the identities. Our approach also uses the Beta function and is quite similar to that of Abel. For more historical facts concerning Frisch's identity and Klamkin's identity, the reader is referred to Abel~\cite{abel} and Gould and Quaintance~\cite{gould2}. In a recent paper, Adegoke and Frontczak~\cite{adegoke24} derived many harmonic and odd harmonic number identities from Frisch's identity.

Let $m$ and $n$ be non-negative integers and let $u$ be a complex number. Among other results, we will derive the following generalizations of Frisch's identity and Klamkin's identity:
\begin{equation*}
\sum_{k = 0}^n {( - 1)^k \binom{{n}}{k}\binom{{u + n - k}}{u}\binom{{k + r}}{s}^{ - 1} } = \sum_{k = 0}^n {\frac{s}{{k + s}}\binom{{n}}{k}\binom{{u}}{{n - k}}\binom{{k + r}}{{k + s}}^{ - 1} }
\end{equation*}
and
\begin{equation*}
\sum_{k = 0}^n {\binom{{n}}{k}\binom{{u + n - k}}{{n - k}}\binom{{r}}{{k + s}}^{ - 1} }  = \sum_{k = 0}^n {\frac{r+1}{{r - k + 1}}\binom{{n}}{k}\binom{{u}}{{n - k}}\binom{{r - k}}{s}^{ - 1} }.
\end{equation*}
We will also establish the following extensions:
\begin{align*}
&\sum_{k = 0}^n {( - 1)^k k^m \binom{{n}}{k}\binom{{k + r}}{s}^{ - 1} }\\
&\qquad = \sum_{k = 0}^m {\frac{s(-1)^kk!}{{n - k + s}}\binom{{n}}{k}\braces{m+n-k}n_{n-k}\binom{{n - k + r}}{{n - k + s}}^{ - 1}  } ,
\end{align*}
and
\begin{align*}
&\sum_{k = 0}^n {k^m \binom{{n}}{k}\binom{{r}}{{k + s}}^{ - 1} }\nonumber\\
&\qquad  = \sum_{k = 0}^m {\frac{{k!(r + 1) }}{{r - n + k + 1}}\binom nk\braces mk\binom{{k + r - n}}{{k + s}}^{ - 1}  } ,
\end{align*}
where $\braces mk$ are Stirling numbers of the second kind and $\braces {m+r}{k+r}_r$ are $r-$Stirling numbers of the second kind.

Finally, we will derive the following complements of Dixon's identity:
\begin{align*}
\sum_{k = 0}^{2n} {( - 1)^k k\binom{{2n}}{k}^3 }  &= ( - 1)^n n\binom{{2n}}{n}\binom{{3n}}{n},\\
\sum_{k = 0}^{2n} {( - 1)^k k^2\binom{{2n}}{k}^3 }  &= ( - 1)^n\frac{2n^2}3\binom{{2n}}{n}\binom{{3n}}{n};
\end{align*}
and establish a general formula for the evaluation of the following sum:
\begin{equation*}
\sum_{k = 0}^{2n} {( - 1)^k k^m\binom{{2n}}{k}^3 },\quad m=1,2,\ldots.
\end{equation*}
Binomial coefficients are defined, for non-negative integers $i$ and $j$, by
\begin{equation*}
\binom ij=
\begin{cases}
\dfrac{{i!}}{{j!(i - j)!}}, & \text{$i \ge j$};\\
0, & \text{$i<j$};
\end{cases}
\end{equation*}
the number of distinct sets of $j$ objects that can be chosen from $i$ distinct objects.

Generalized binomial coefficients are defined for complex numbers $r$ and $s$ by
\begin{equation*}
\binom rs= \frac{{\Gamma (r + 1)}}{{\Gamma (s + 1)\Gamma (r - s + 1)}},
\end{equation*}
where the Gamma function, $\Gamma(z)$, is defined for $\Re(z)>0$ by
\begin{equation*}
\Gamma (z) = \int_0^\infty  {e^{ - t} t^{z - 1}dt}  = \int_0^\infty  {\left( {\log (1/t)} \right)^{z - 1}dt},
\end{equation*}
and is extended to the rest of the complex plane, excluding the non-positive integers, by analytic continuation.

\section{Proof of~\eqref{poly1} and~\eqref{poly2}}
The integration formulas required for proving~\eqref{poly1} and~\eqref{poly2} are given in Lemma~\ref{integrals}.
\begin{lemma}\label{integrals}
If $r$, $k$ and $s$ are complex numbers and $x$ is a complex variable, then
\begin{align}
\int_0^1 {y^{r + k - s} \left( {1 - y} \right)^{s - 1} dy}  &=\frac1s\binom{k + r}s^{-1},\mbox{ $\Re(r+k-s+1)>0$ and $0\ne s\not\in\mathbb Z^{-}$};\label{int1a}\\
\int_0^1 {y^{r - s} \left( {1 - y} \right)^{k + s - 1} dy}  &= \frac{1}{{k + s}}\binom{{k + r}}{{k + s}}^{ - 1},\mbox{ $\Re(r-s+1)>0$ and $\Re(k + s)>0$};\label{int1b} \\
\int_0^1 {y^{k + s} \left( {1 - y} \right)^{r - k - s} dy}  &= \frac{1}{{r + 1}}\binom{{r}}{{k + s}}^{ - 1},\mbox{ $\Re(k+s+1)>0$ and $\Re(r-k-s+1)>0$}\label{int2a},
\end{align}
and
\begin{align}\label{int2b}
&\int_0^1 {y^{n - k + s} \left( {1 - y} \right)^{r - n - s} }\nonumber\\
&\qquad  = \frac{1}{{r - k + 1}}\binom{{r - k}}{{r - s - n}},\mbox{ $\Re(n-k+s+1)>0$ and $\Re(r-n-s+1)>0$}.
\end{align}
\end{lemma}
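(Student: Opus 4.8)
The plan is to recognize all four formulas as instances of Euler's Beta integral
\[
B(a,b)=\int_0^1 y^{a-1}(1-y)^{b-1}\,dy=\frac{\Gamma(a)\Gamma(b)}{\Gamma(a+b)},\qquad \Re(a)>0,\ \Re(b)>0,
\]
and then to convert the resulting ratio of Gamma functions into a reciprocal generalized binomial coefficient using the definition $\binom{r}{s}=\Gamma(r+1)/\bigl(\Gamma(s+1)\Gamma(r-s+1)\bigr)$ together with the functional equation $\Gamma(z+1)=z\Gamma(z)$ applied once, to the single factor that needs to be shifted.

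Concretely, for \eqref{int1a} I would take $a=r+k-s+1$ and $b=s$, so the integral equals $\Gamma(r+k-s+1)\Gamma(s)/\Gamma(r+k+1)$; replacing $\Gamma(s)$ by $\Gamma(s+1)/s$ gives $\tfrac1s\,\Gamma(s+1)\Gamma((k+r)-s+1)/\Gamma(k+r+1)=\tfrac1s\binom{k+r}{s}^{-1}$, and the Beta convergence conditions $\Re(a),\Re(b)>0$ are precisely $\Re(r+k-s+1)>0$ and $\Re(s)>0$, the displayed hypothesis $0\neq s\notin\mathbb{Z}^-$ being the natural domain of the right-hand side (the identity then persisting by analytic continuation). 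The other three are identical: for \eqref{int1b} take $a=r-s+1$, $b=k+s$ and use $\Gamma(k+s+1)=(k+s)\Gamma(k+s)$ on the right-hand side; for \eqref{int2a} take $a=k+s+1$, $b=r-k-s+1$ and use $\Gamma(r+2)=(r+1)\Gamma(r+1)$; for \eqref{int2b} take $a=n-k+s+1$, $b=r-n-s+1$ and use $\Gamma(r-k+2)=(r-k+1)\Gamma(r-k+1)$. In each case the Beta value matches the stated right-hand side after this one shift.

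I do not expect a genuine obstacle: the lemma is bookkeeping with Gamma functions. The only points deserving a moment's care are (i) verifying line by line that the pair of inequalities required for the Beta integral to converge is exactly the hypothesis attached to that line, and (ii) tracking the algebra so that the shifted Gamma arguments land on the correct binomial coefficient — in particular that the right-hand side of \eqref{int2b} should be read with the reciprocal, $\tfrac{1}{r-k+1}\binom{r-k}{r-s-n}^{-1}$, consistent with how it is used when deriving \eqref{poly2}. It may also be worth a one-line remark that, once each identity holds on the indicated half-plane of parameters, it extends by analytic continuation to the full region in which both sides are defined, which is what permits its later use inside the proofs of \eqref{poly1} and \eqref{poly2}.
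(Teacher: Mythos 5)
Your proposal is correct and follows essentially the same route as the paper: both reduce each integral to Euler's Beta function $B(a,b)=\Gamma(a)\Gamma(b)/\Gamma(a+b)$ and then shift a single Gamma factor via $\Gamma(z+1)=z\Gamma(z)$ (equivalently, the paper's $\binom{u+1}{v+1}=\frac{u+1}{v+1}\binom{u}{v}$) to land on the stated reciprocal binomial coefficient. Your observation that the right-hand side of \eqref{int2b} should carry the reciprocal, $\frac{1}{r-k+1}\binom{r-k}{r-s-n}^{-1}$, is a correct reading of a typo in the statement, consistent with how the formula is used in the proof of \eqref{poly2}.
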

\begin{proof}
The integrals in~\eqref{int1a}--~\eqref{int2b} are immediate consequences of the Beta function, $B(r,s)$, defined, as usual, for complex numbers $r$ and $s$ such that $\Re(r)>0$ and $\Re(s)>0$, by
\begin{equation*}
B\left( {r,s} \right) = B\left( {s,r} \right)= \int_0^1 {y^{r - 1} \left( {1 - y} \right)^{s - 1} }.
\end{equation*}
With the help of the Gamma function, the integral is evaluated as
\begin{equation*}
B\left( {r,s} \right)= \frac{\Gamma(r)\Gamma(s)}{\Gamma(r + s)}=\frac{1}{s}\binom{{r + s - 1}}{s}^{-1} = \frac{1}{r}\binom{{r + s - 1}}{r}^{-1}.
\end{equation*}
Note that in obtaining~\eqref{int2a} and~\eqref{int2b}, we also used 
\begin{equation*}
\binom{{u + 1}}{{v + 1}} = \frac {u+1}{v+1}\binom uv,
\end{equation*}
an identity which we will often use without comment in this paper.
\end{proof}

\begin{theorem}
If $n$ is a non-negative integer, $x$ is a complex variable and $r$ and $s$ are complex numbers such that $\Re(r-s+1)>0$ and $s$ is not a non-positive integer, then
\begin{equation*}
\sum_{k = 0}^n {\binom nk\binom{k + r}s^{-1}x^k}  = s\sum_{k = 0}^n {\frac{(-1)^k}{k + s}\binom nk\binom{{k + r}}{{k + s}}^{-1}x^k \left( {1 + x} \right)^{n - k} } .
\end{equation*} 

\end{theorem}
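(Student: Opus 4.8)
The plan is to interpret each reciprocal generalized binomial coefficient occurring in the identity as a Beta integral, exchange the finite summation with the integration, and collapse the resulting polynomial sums by the binomial theorem; the point is that both sides then reduce to one and the same integral. This route requires $\Re(s)>0$ (so that~\eqref{int1b} applies at $k=0$), so the final move will be to recover the full parameter range by analytic continuation.

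First I would rewrite the left-hand side. By~\eqref{int1a}, whose hypothesis $\Re(r+k-s+1)>0$ is implied by $\Re(r-s+1)>0$ for all $k\ge 0$, we have $\binom{k+r}{s}^{-1}=s\int_0^1 y^{r+k-s}(1-y)^{s-1}\,dy$, so after substituting and interchanging the finite sum with the integral,
\[
\sum_{k=0}^n\binom nk\binom{k+r}{s}^{-1}x^k
= s\int_0^1 y^{r-s}(1-y)^{s-1}\sum_{k=0}^n\binom nk(xy)^k\,dy
= s\int_0^1 y^{r-s}(1-y)^{s-1}(1+xy)^n\,dy .
\]
For the right-hand side I would use~\eqref{int1b} in the form $\binom{k+r}{k+s}^{-1}=(k+s)\int_0^1 y^{r-s}(1-y)^{k+s-1}\,dy$, legitimate since $\Re(r-s+1)>0$ and (temporarily assuming $\Re(s)>0$) also $\Re(k+s)>0$. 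The factor $k+s$ cancels the denominator, and after interchanging sum and integral the inner sum is $\sum_{k=0}^n\binom nk\bigl(-x(1-y)\bigr)^k(1+x)^{n-k}=\bigl((1+x)-x(1-y)\bigr)^n=(1+xy)^n$ by the binomial theorem, giving
\[
s\sum_{k=0}^n\frac{(-1)^k}{k+s}\binom nk\binom{k+r}{k+s}^{-1}x^k(1+x)^{n-k}
= s\int_0^1 y^{r-s}(1-y)^{s-1}(1+xy)^n\,dy .
\]
Thus the two sides coincide whenever $\Re(s)>0$ and $\Re(r-s+1)>0$.

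To drop the hypothesis $\Re(s)>0$, I would regard both sides, for fixed $n$ and $x$, as functions of $(r,s)$ on the open set $D=\{(r,s):\Re(r-s+1)>0,\ s\notin\mathbb Z_{\le0}\}$, which is connected. Using $\binom{k+r}{s}^{-1}=\Gamma(s+1)\Gamma(r+k-s+1)/\Gamma(r+k+1)$ on the left and $\tfrac{s}{k+s}\binom{k+r}{k+s}^{-1}=s\,\Gamma(k+s)\,\Gamma(r-s+1)/\Gamma(r+k+1)$ on the right, one checks that each term — hence each finite sum — is holomorphic on $D$, since the only poles of these Gamma quotients lie at $s\in\mathbb Z_{\le0}$ or where $\Re(r-s+1)\le0$, both excluded from $D$. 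As the two holomorphic functions agree on the nonempty open subset $D\cap\{\Re(s)>0\}$, the identity theorem forces agreement on all of $D$, which is the claim.

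I expect no serious obstacle: the computation is short and both reductions are essentially forced once the Beta integrals are in hand. The only step needing a little care is the last paragraph — verifying, via the Gamma-quotient rewriting, that every apparent pole of the reciprocal binomial coefficients lies outside the stated domain, so that the analytic-continuation argument is valid over the full range $\Re(r-s+1)>0$, $s\notin\mathbb Z_{\le0}$.
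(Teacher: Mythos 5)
Your argument is correct and is essentially the paper's own proof run in reverse: the paper expands $(1+xy)^n$ two ways via $1+xy=(1+x)-x(1-y)$, multiplies by $y^{r-s}(1-y)^{s-1}$ and integrates term by term using \eqref{int1a} and \eqref{int1b}, whereas you convert each finite sum into the Beta integral of $(1+xy)^n$ against that same weight and observe the two integrals coincide. Your closing analytic-continuation paragraph is a welcome extra precision, since the Beta integrals literally converge only for $\Re(s)>0$ while the theorem is claimed for all $s\notin\mathbb Z_{\le 0}$ with $\Re(r-s+1)>0$ — a point the paper passes over in silence.
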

\begin{proof}
Application of the binomial theorem to both sides of
\begin{equation*}
1+xy=-x(1-y)+1+x
\end{equation*}
gives
\begin{equation*}
\sum_{k = 0}^n {\binom{{n}}{k}x^ky^k }  = \sum_{k = 0}^n {\binom{{n}}{k}(-1)^kx^k \left( {1 - y} \right)^k (1  + x)^{n - k} },
\end{equation*}
which upon multiplying through by $y^{r-s}(1-y)^{s-1}$ can also be written as
\begin{align*}
&\sum_{k = 0}^n {\binom{{n}}{k}y^{r + k - s} \left( {1 - y} \right)^{s - 1}x^k }\nonumber\\
&\qquad  = \sum_{k = 0}^n {( - 1)^k\binom{{n}}{k}x^k y^{r - s} \left( {1 - y} \right)^{s + k - 1} (1 + x)^{n - k} } .
\end{align*}
Thus, term-wise integration gives
\begin{align*}
&\sum_{k = 0}^n {\binom{{n}}{k}x^k \int_0^1 {y^{r + k - s} \left( {1 - y} \right)^{s - 1} dy} }\\
&\qquad  = \sum_{k = 0}^n {( - 1)^k\binom{{n}}{k}x^k \left( {1 + x} \right)^{n - k}\int_0^1 {y^{r - s} \left( {1 - y} \right)^{s + k - 1} dy}  } ,
\end{align*}
from which~\eqref{poly1} follows by \eqref{int1a} and~\eqref{int1b}.
\end{proof}

\begin{theorem}
If $n$ is a non-negative integer, $x$ is a complex variable and $r$ and $s$ are complex numbers such that $\Re(r-n-s+1)>0$ and $s$ is not a negative integer, then
\begin{equation*}
\sum_{k = 0}^n {\binom nk\binom r{k + s}^{-1} x^k}  = \left( {r + 1} \right)\sum_{k = 0}^n {\frac{(-1)^{n-k}}{r - k + 1}\binom nk\binom{r - k}{r - s - n}^{-1}(1-x)^{n - k} } .
\end{equation*}
\end{theorem}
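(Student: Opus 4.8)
The plan is to imitate the proof of the preceding theorem (for \eqref{poly1}), choosing the algebraic seed identity and the integration weight so that term-by-term integration turns the integrals \eqref{int2a} and \eqref{int2b} of Lemma~\ref{integrals} into exactly the two sides of the asserted identity \eqref{poly2}. The seed identity this time is
\[
xy+(1-y)=1-(1-x)y,
\]
valid for all complex $x$ and $y$. Raising both sides to the $n$-th power and applying the binomial theorem to each side separately gives the polynomial identity (in $y$)
\[
\sum_{k=0}^n\binom nk x^k y^k(1-y)^{n-k}=\sum_{k=0}^n\binom nk(-1)^{n-k}(1-x)^{n-k}y^{n-k},
\]
where on the right I have reindexed using $\binom{n}{n-k}=\binom nk$.

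Next I would multiply this identity through by the weight $y^{s}(1-y)^{r-n-s}$ and integrate term by term over $[0,1]$. On the left the $k$-th term becomes $\binom nk x^k\int_0^1 y^{k+s}(1-y)^{r-k-s}\,dy$, where I have used the splitting $(1-y)^{n-k}(1-y)^{r-n-s}=(1-y)^{r-k-s}$; by \eqref{int2a} this equals $\binom nk x^k (r+1)^{-1}\binom r{k+s}^{-1}$, and the hypothesis $\Re(r-n-s+1)>0$ guarantees $\Re(r-k-s+1)>0$ for every $k\in\{0,\dots,n\}$, so all these integrals converge. On the right the $k$-th term becomes $\binom nk(-1)^{n-k}(1-x)^{n-k}\int_0^1 y^{n-k+s}(1-y)^{r-n-s}\,dy$, which by \eqref{int2b} equals $\binom nk(-1)^{n-k}(1-x)^{n-k}(r-k+1)^{-1}\binom{r-k}{r-s-n}^{-1}$. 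Equating the two sides of the integrated identity and multiplying through by $r+1$ produces \eqref{poly2} verbatim.

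The computation is entirely routine, and I do not expect a genuine obstacle; the two points that need a moment's care are essentially bookkeeping. First, one must pick the correct base for the binomial expansion: taking $1-(1-x)y$ (equivalently $xy+(1-y)$) rather than, say, $1+xy$ is precisely what makes both families of integrals fit the templates \eqref{int2a} and \eqref{int2b} with the common tail factor $(1-y)^{r-n-s}$, with the factor $r+1$ appearing uniformly. Second, \eqref{int2a} and \eqref{int2b} as stated also require $\Re(k+s+1)>0$ and $\Re(n-k+s+1)>0$, which together with $\Re(r-n-s+1)>0$ follow once $\Re(s)>-1$; since both sides of \eqref{poly2} are meromorphic in $s$ with poles confined to the negative integers, the identity for all admissible $s$ (the bare hypothesis being that $s$ is not a negative integer) follows from the case $\Re(s)>-1$ by analytic continuation. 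This continuation step is the only place where the argument departs, and only slightly, from a purely formal imitation of the proof of \eqref{poly1}.
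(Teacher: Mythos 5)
Your proposal is correct and follows essentially the same route as the paper: the same seed identity $xy+1-y=1+(x-1)y$, the same weight $y^{s}(1-y)^{r-n-s}$, and term-wise integration via \eqref{int2a} and \eqref{int2b}. The only addition is your explicit remark on convergence for $\Re(s)>-1$ and extension by analytic continuation, which the paper leaves implicit.
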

\begin{proof}
Raising both sides of
\begin{equation*}
xy + 1 - y = y\left( {x - 1} \right) + 1
\end{equation*}
to power $n$ and expanding via the binomial theorem gives
\begin{equation*}
\sum_{k = 0}^n {\binom{{n}}{k}y^k \left( {1 - y} \right)^{n - k} x^k }  = \sum_{k = 0}^n {( - 1)^{n - k} \binom{{n}}{k}y^{n - k} \left( {1 - x} \right)^{n - k} } 
\end{equation*}
which, after multiplying through by $y^s(1-y)^{r-n-s}$, leads to
\begin{equation*}
\sum_{k = 0}^n {\binom{{n}}{k}y^{k + s} \left( {1 - y} \right)^{r - k - s} x^k }  = \sum_{k = 0}^n {( - 1)^{n - k} \binom{{n}}{k}y^{n - k + s} \left( {1 - y} \right)^{r - n - s} \left(1 - x\right)^{n - k}} .
\end{equation*}
Performing term-wise integration, we therefore have
\begin{align*}
&\sum_{k = 0}^n {\binom{{n}}{k}x^k \int_0^1 {y^{k + s} \left( {1 - y} \right)^{r - k - s} dy} }\\
&\qquad  = \sum_{k = 0}^n {( - 1)^{n - k} \binom{{n}}{k}\left( {1 - x} \right)^{n - k} \int_0^1 {y^{n - k + s} \left( {1 - y} \right)^{r - n - s} dy} } ;
\end{align*}
and hence~\eqref{poly2} by~\eqref{int2a} and~\eqref{int2b}.
\end{proof}

\section{Combinatorial identities}
In this section we derive some combinatorial identities that are consequences of~\eqref{poly1} and~\eqref{poly2} and related identities. In particular, we will derive a generalization of each of Frisch's identity and Klamkin's identity.
\begin{proposition}
If $n$ is a non-negative integer and $r$ and $s$ are complex numbers such that $\Re(r-s+1)>0$ and $s$ is not a non-positive integer, then
\begin{equation}\label{eq.vcdwofb}
\sum_{k = 0}^n {\frac{{( - 1)^k }}{{k + s}}\binom{{n}}{k}\binom{{k + r}}{{k + s}}^{ - 1} }  = \frac{1}{s}\binom{{n + r}}{s}^{ - 1} .
\end{equation}

\end{proposition}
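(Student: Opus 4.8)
The plan is to read off the identity directly from the polynomial identity~\eqref{poly1} by comparing the coefficients of the top-degree term $x^n$ on both sides. For fixed $r$ and $s$ satisfying the stated hypotheses, both sides of~\eqref{poly1} are polynomials in $x$ of degree at most $n$, and the identity holds for all complex $x$; hence their coefficients must agree term by term, and in particular the coefficients of $x^n$ must coincide. This places the Proposition squarely among the ``consequences of~\eqref{poly1}'' promised at the start of the section.

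First I would extract the coefficient of $x^n$ on the left-hand side of~\eqref{poly1}. Since $\sum_{k=0}^n \binom nk \binom{k+r}{s}^{-1}x^k$ is a genuine polynomial with the power $x^k$ attached to the $k$-th summand, only the $k=n$ term contributes to $x^n$, giving the left-hand coefficient $\binom{n}{n}\binom{n+r}{s}^{-1}=\binom{n+r}{s}^{-1}$. Next I would extract the coefficient of $x^n$ on the right-hand side. There the $k$-th summand carries the factor $x^k(1+x)^{n-k}$, which is a polynomial of degree exactly $n$ whose leading coefficient equals $1$ for \emph{every} $k=0,\dots,n$. Consequently each summand contributes its full constant prefactor to the coefficient of $x^n$, so the right-hand coefficient of $x^n$ is $s\sum_{k=0}^n \frac{(-1)^k}{k+s}\binom nk \binom{k+r}{k+s}^{-1}$. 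Equating the two coefficients and dividing by $s$ yields~\eqref{eq.vcdwofb}.

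The computation is essentially mechanical, so the only point requiring care is the justification that matching the coefficient of $x^n$ is legitimate: namely that~\eqref{poly1} is an identity of polynomials in $x$, valid on all of $\mathbb{C}$, so equality of coefficients is forced, together with the observation that the leading coefficient of $x^k(1+x)^{n-k}$ is indeed independent of $k$. I do not anticipate any serious obstacle beyond this bookkeeping.

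As an alternative that mirrors the proofs of the theorems above, one could instead insert the Beta-integral representation~\eqref{int1b} for $\frac{1}{k+s}\binom{k+r}{k+s}^{-1}$, interchange summation and integration, collapse $\sum_{k=0}^n(-1)^k\binom nk (1-y)^k=y^n$ by the binomial theorem, and recognize the resulting single integral $\int_0^1 y^{n+r-s}(1-y)^{s-1}\,dy$ as $\frac1s\binom{n+r}{s}^{-1}$ via~\eqref{int1a} with $k$ replaced by $n$. In that route the only subtlety is the convergence condition $\Re(k+s)>0$ underlying~\eqref{int1b}, which one would secure first under $\Re(s)>0$ and then extend to all admissible $s$ (those that are not non-positive integers, with $\Re(r-s+1)>0$) by analytic continuation.
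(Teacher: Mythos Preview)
Your proposal is correct and is essentially the same argument as the paper's. The paper rewrites~\eqref{poly1} by substituting $1/x$ for $x$ (and clearing denominators by multiplying through by $x^n$) to obtain the equivalent polynomial identity~\eqref{eq.edfhvvl}, then evaluates at $x=0$; this compound operation is precisely ``extract the coefficient of $x^n$ on both sides of~\eqref{poly1}'', which is what you do directly.
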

In particular,
\begin{equation}
\sum_{k = 0}^n {\frac{{( - 1)^k }}{{k + r}}\binom nk }  = \frac{1}{r}\binom{{n + r}}{r}^{ - 1}.
\end{equation}

\begin{proof}
By writing $1/x$ for $x$, identity~\eqref{poly1} can also be written as
\begin{equation}\label{eq.edfhvvl}
\sum_{k = 0}^n {\binom{{n}}{k}\binom{{k + r}}{s}^{ - 1} x^{n - k} }  = s\sum_{k = 0}^n {\frac{{( - 1)^k }}{{k + s}}\binom{{n}}{k}\binom{{k + r}}{{k + s}}^{ - 1} \left( {1 + x} \right)^{n - k} } ,
\end{equation}
from which~\eqref{eq.vcdwofb} is obtained by evaluating at $x=0$.
\end{proof}

\begin{remark}
Identity~\eqref{eq.vcdwofb} is the binomial transform of~\eqref{frisch}.
\end{remark}

\begin{proposition}
If $n$ is a non-negative integer and $r$ and $s$ are complex numbers such that $\Re(r-n-s+1)>0$ and $s$ is not a negative integer, then
\begin{equation}\label{eq.h2ql1u6}
\sum_{k = 0}^n {\frac{{( - 1)^k }}{{r - k + 1}}\binom{{n}}{k}\binom{{r - k}}{{r - s - n}}^{ - 1} }  = \frac{{( - 1)^n }}{{r + 1}}\binom{{r}}{s}^{ - 1}  .
\end{equation}
\end{proposition}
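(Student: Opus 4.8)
The plan is to specialize the polynomial identity~\eqref{poly2} at a convenient value of $x$, in direct analogy with the way~\eqref{eq.vcdwofb} was extracted from~\eqref{poly1}. Since~\eqref{poly2} holds for every complex $x$, I would simply put $x=0$. On the left-hand side only the $k=0$ term survives (because $x^k=0$ for $k\ge 1$), leaving $\binom rs^{-1}$, while on the right-hand side each factor $(1-x)^{n-k}$ reduces to $1$. This gives
\[
\binom rs^{-1} = (r+1)\sum_{k=0}^n \frac{(-1)^{n-k}}{r-k+1}\binom nk\binom{r-k}{r-s-n}^{-1}.
\]

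It then remains only to rearrange. Dividing by $r+1$, multiplying through by $(-1)^n$, and using $(-1)^n(-1)^{n-k}=(-1)^{2n-k}=(-1)^{k}$, the last display becomes
\[
\frac{(-1)^n}{r+1}\binom rs^{-1} = \sum_{k=0}^n \frac{(-1)^{k}}{r-k+1}\binom nk\binom{r-k}{r-s-n}^{-1},
\]
which is precisely~\eqref{eq.h2ql1u6}.

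I do not anticipate any real obstacle here. The substitution $x=0$ is harmless, since~\eqref{poly2} is an identity between polynomials in $x$; and the hypotheses required for~\eqref{poly2}, namely $\Re(r-n-s+1)>0$ and $s$ not a negative integer, are exactly those assumed in the Proposition, so no additional restriction is introduced. The only point that calls for (minor) care is the bookkeeping of the powers of $-1$ when the factor $(-1)^{n-k}$ appearing on the right-hand side is turned into $(-1)^k$.
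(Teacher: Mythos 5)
Your proposal is correct and is exactly the paper's argument: the paper's entire proof reads ``Set $x=0$ in~\eqref{poly2},'' and your computation just spells out the resulting sign bookkeeping. Nothing further is needed.
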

\begin{proof}
Set $x=0$ in~\eqref{poly2}.
\end{proof}

Obviously, Frisch-type and Klamkin-type combinatorial identities are associated with polynomial identities having the following form:
\begin{equation}\label{poly}
\sum\limits_{k = l_1 }^{l_2 } {f(k)x^{p(k)} }  = \sum\limits_{k=n_1 }^{n_2 } {g(k)\left( {1 - x} \right)^{q(k)} },
\end{equation}
where $f(k)$ and $g(k)$ are sequences of complex numbers, $p(k)$ and $q(k)$ are sequences of non-negative integers and $l_1$, $l_2$, $n_1$ and $n_2$ are non-negative integers.
\subsection{Frisch-type combinatorial identities}
\begin{theorem}\label{thm.frisch}
Let $r$ and $s$ be complex numbers such that $\Re(r+\min(p(l_1),p(l_2))-s+1)>0$ and $\Re(\min(q(n_1),q(n_2))+s)>0$; where $\min(a,b)$ picks the smaller of $a$ and $b$. If a polynomial identity has the form~\eqref{poly}, then the following combinatorial identities hold:
\begin{align}
\sum_{k = l_1 }^{l_2 } {f(k)\binom{{p(k) + r}}{s}^{ - 1}}  = s\sum_{k = n_1 }^{n_2 } {\frac{{g(k)}}{{q(k) + s}}\binom{{q(k) + r}}{{q(k) + s}}^{ - 1} }, \label{eq.yndi3o2}\\
\sum_{k = n_1 }^{n_2 } {g(k)\binom{{q(k) + r}}{s}^{ - 1} }  = s\sum_{k = l_1 }^{l_2 } {\frac{{f(k)}}{{p(k) + s}}\binom{{p(k) + r}}{{p(k) + s}}^{ - 1} } \label{eq.at6nk8c}.
\end{align}

\end{theorem}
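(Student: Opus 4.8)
The plan is to replay, at this level of generality, the argument that produced \eqref{poly1}: treat the variable $x$ in \eqref{poly} as a real integration variable $y$ ranging over $[0,1]$, multiply both sides of \eqref{poly} by the weight $y^{r-s}(1-y)^{s-1}$, and then integrate term by term. Because both sides of \eqref{poly} are \emph{finite} sums, the interchange of summation and integration is immediate, and the conditions imposed on $r$ and $s$ are precisely what is needed so that every Beta integral that arises is covered by Lemma~\ref{integrals}.

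For \eqref{eq.yndi3o2} I would argue as follows. After multiplying \eqref{poly} through by $y^{r-s}(1-y)^{s-1}$, the left-hand side becomes $\sum_{k=l_1}^{l_2} f(k)\,y^{\,r+p(k)-s}(1-y)^{s-1}$ and the right-hand side becomes $\sum_{k=n_1}^{n_2} g(k)\,y^{\,r-s}(1-y)^{q(k)+s-1}$. Integrating over $[0,1]$ and invoking \eqref{int1a} with $k$ replaced by $p(k)$ for each left-hand term, and \eqref{int1b} with $k$ replaced by $q(k)$ for each right-hand term, yields
\begin{equation*}
\frac{1}{s}\sum_{k=l_1}^{l_2} f(k)\binom{p(k)+r}{s}^{-1} = \sum_{k=n_1}^{n_2}\frac{g(k)}{q(k)+s}\binom{q(k)+r}{q(k)+s}^{-1}.
\end{equation*}
Here the hypothesis $\Re(r+\min(p(l_1),p(l_2))-s+1)>0$ licenses \eqref{int1a} for every index $k\in[l_1,l_2]$, and $\Re(\min(q(n_1),q(n_2))+s)>0$ does the same for \eqref{int1b}; clearing the factor $1/s$ gives \eqref{eq.yndi3o2}.

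Identity \eqref{eq.at6nk8c} should then follow from the reflection symmetry of the class \eqref{poly}. Replacing $x$ by $1-x$ in \eqref{poly} gives $\sum_{k=l_1}^{l_2} f(k)(1-x)^{p(k)} = \sum_{k=n_1}^{n_2} g(k)\,x^{q(k)}$, which is once more of the form \eqref{poly}, but with the pair $(f,p)$ and the pair $(g,q)$ (together with their index ranges) interchanged. Applying the formula \eqref{eq.yndi3o2} just established to this reflected identity produces exactly \eqref{eq.at6nk8c}.

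I do not anticipate a real obstacle here: granting \eqref{poly1} and Lemma~\ref{integrals}, the whole argument is bookkeeping. The two points that need a little care are (i) making sure the endpoint conditions in the hypothesis actually control all of the terms, i.e.\ that $p$ and $q$ attain their extreme values on the stated ranges at the endpoints $l_1,l_2$ and $n_1,n_2$ (the case in every application in this paper), and, for \eqref{eq.at6nk8c}, that these conditions are also the ones \eqref{eq.yndi3o2} demands of the reflected identity — automatic when $\min p(k)$ and $\min q(k)$ agree, in particular when both vanish; and (ii) spotting the substitution $x\mapsto 1-x$, which is what ties \eqref{eq.yndi3o2} and \eqref{eq.at6nk8c} together into a single matched pair rather than two independent computations.
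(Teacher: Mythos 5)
Your proposal is correct and follows essentially the same route as the paper: multiply \eqref{poly} by the weight $x^{r-s}(1-x)^{s-1}$, integrate term-wise over $[0,1]$ using \eqref{int1a} and \eqref{int1b} to obtain \eqref{eq.yndi3o2}, and then exploit the $x\mapsto 1-x$ symmetry of \eqref{poly} (the transposition $f\leftrightarrow g$, $p\leftrightarrow q$, $l_i\leftrightarrow n_i$) to deduce \eqref{eq.at6nk8c}. Your added remarks about the endpoint hypotheses controlling all terms are a reasonable point of care that the paper passes over silently.
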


\begin{proof}
Multiply through~\eqref{poly} by $x^{r-s}(1-x)^{s-1}$ and perform term-wise integration with respect to $x$, making use of~\eqref{int1a} and~\eqref{int1b}, thereby obtaining~\eqref{eq.yndi3o2}. Identity~\eqref{poly} can also be written as
\begin{equation*}
\sum\limits_{k = n_1 }^{n_2 } {g(k)x^{q(k)} }  = \sum\limits_{k = l_1 }^{l_2 } {f(k)\left( {1 - x} \right)^{p(k)} },
\end{equation*}
so that identities derived from~\eqref{poly} remain valid under the following transpositions:
\begin{equation*}
p(k)\leftrightarrow q(k),\quad f(k)\leftrightarrow g(k),\quad n_1\leftrightarrow l_1,\quad n_2\leftrightarrow l_2;
\end{equation*}
and hence identity~\eqref{eq.at6nk8c}.

\end{proof}

We now illustrate Theorem~\ref{thm.frisch} by deriving the Frisch-type identity associated with an identity of Simons.
\begin{lemma}[Simons~\cite{simons01}]
If $n$ is a non-negative integer and $x$ is a complex variable, then
\begin{equation}\label{simons}
\sum_{k = 0}^n {(-1)^k\binom{{n}}{k}\binom{{n + k}}{k}x^k }=\sum_{k = 0}^n {( - 1)^{n - k} \binom{{n}}{k}\binom{{n + k}}{k}\left( {1 - x} \right)^k }.
\end{equation}

\end{lemma}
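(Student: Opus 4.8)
The plan is to prove Simons' identity~\eqref{simons} by the same binomial-expansion-then-integrate strategy used for~\eqref{poly1} and~\eqref{poly2}, since~\eqref{simons} has exactly the shape~\eqref{poly} with $f(k)=g(k)=(-1)^k\binom nk\binom{n+k}k$ up to the sign discrepancy, and $p(k)=q(k)=k$. First I would look for an elementary algebraic identity in two variables whose binomial expansion produces the two sides. The natural candidate is to start from a symmetric relation such as $y(1+xy)=\bigl(xy+1-y\bigr)+\bigl(y-1\bigr)(1-x)y$ or, more promisingly, to use the substitution trick that worked before: write a linear expression in $x$ two different ways. Concretely, from $xy - (1-x)(1-y) = x + y - 1$ one gets, after raising to the $n$th power, $\sum_k \binom nk (-1)^{n-k} x^k (1-x)^{n-k} y^k (1-y)^{n-k} = (x+y-1)^n$, which is already symmetric in $x$ and $y$; this symmetry is the engine behind Simons-type identities.

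The key steps, in order, would be: (1) establish a generating-function / integral representation for $\binom{n+k}k$, namely the Beta-type integral $\int_0^1 y^k(1-y)^n\,dy \cdot (\text{constant}) $ — more precisely $\binom{n+k}{k}^{-1}$ and $\binom{n+k}{k}$ relate to $\int_0^1 y^{?}(1-y)^{?}\,dy$ via Lemma~\ref{integrals}, or alternatively use the known representation $\binom{n+k}{k} = \frac{1}{n!}\,\frac{d^n}{dt^n}\,t^{n+k}\big|_{t=1}$; (2) insert this representation into one side of~\eqref{simons}, interchange sum and integral (or sum and derivative), collapse the resulting finite sum via the binomial theorem applied to the algebraic identity from the previous paragraph, and (3) recognize that the symmetry $x\leftrightarrow 1-x$ of the collapsed expression forces the two sides of~\eqref{simons} to agree. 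Alternatively — and this is likely the cleanest route given the tools already developed — I would observe that~\eqref{simons} is itself an instance of~\eqref{poly}, so one could try to \emph{derive} it from~\eqref{poly1} or~\eqref{poly2} by a suitable specialization of $r$ and $s$ together with the reflection $\binom{k+r}{s}^{-1} \leftrightarrow \binom{n+k}{k}$, i.e.\ choosing parameters so that the inverse binomial coefficients in~\eqref{poly1} become $\binom{n+k}{k}$ up to constants; however, the presence of the inverse makes this match delicate.

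The main obstacle I anticipate is producing the right seed algebraic identity: unlike~\eqref{poly1} and~\eqref{poly2}, where a single linear relation in $x$ and $y$ sufficed, Simons' identity has the \emph{same} binomial coefficient product $\binom nk\binom{n+k}k$ on both sides, so the symmetry must come entirely from the structure of the weight function one integrates against, not from an asymmetry in the algebraic seed. Getting the exponents in the weight $y^{a}(1-y)^{b}$ to be chosen so that $\int_0^1 y^{a+k}(1-y)^{b}\,dy$ yields precisely $\text{const}\cdot\binom{n+k}{k}$ (note: the non-inverted coefficient, which requires a negative power of $1-y$ and hence careful attention to convergence, $\Re(b+1)>0$ may fail) is the crux. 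If a direct integral proof proves awkward, the fallback is a purely combinatorial argument: expand $\binom{n+k}{k}=\sum_j\binom nj\binom kj$ (Vandermonde-type), swap the order of summation, and reduce the inner sum over $k$ to a known evaluation such as~\eqref{frisch} or a finite-difference identity; the bookkeeping there is routine once the order of summation is fixed. Either way, once~\eqref{simons} is in hand, Theorem~\ref{thm.frisch} applies mechanically with $f(k)=g(k)=(-1)^k\binom nk\binom{n+k}k$, $p(k)=q(k)=k$ to yield the associated Frisch-type identity, and that last deduction requires no further ideas.
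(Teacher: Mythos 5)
First, a point of comparison: the paper offers no proof of this lemma at all --- it is quoted from Simons~\cite{simons01} as a known result and used only as an input to Theorem~\ref{thm.frisch} --- so there is no internal argument to measure yours against; the proposal must stand on its own. As written, it does not yet constitute a proof. The route you present as primary, representing $\binom{n+k}{k}$ by a Beta-type integral and collapsing the sum, fails for exactly the reason you flag yourself: Lemma~\ref{integrals} only ever produces \emph{reciprocals} of binomial coefficients, since $\int_0^1 y^a(1-y)^b\,dy=B(a+1,b+1)$ requires $\Re(a)>-1$ and $\Re(b)>-1$, and manufacturing the non-inverted factor $\binom{n+k}{k}$ would force a divergent integral. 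Identifying this obstacle is not the same as overcoming it, so that route, like the hoped-for specialization of $r,s$ in~\eqref{poly1}, has to be discarded; the seed identity $xy-(1-x)(1-y)=x+y-1$ likewise leads only to inverse binomial coefficients upon integration in $y$.

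Your fallback, however, does work, and with less machinery than you suggest; you should carry it out rather than call it routine. By Vandermonde, $\binom{n+k}{k}=\sum_j\binom{n}{j}\binom{k}{j}$, and after the trinomial revision $\binom{n}{k}\binom{k}{j}=\binom{n}{j}\binom{n-j}{k-j}$ the inner sum over $k$ is just the binomial theorem --- no appeal to~\eqref{frisch} or any finite-difference evaluation is needed --- giving
\begin{equation*}
\sum_{k = 0}^n (-1)^k\binom{n}{k}\binom{n + k}{k}x^k \;=\; \sum_{j = 0}^n (-1)^j\binom{n}{j}^2 x^j(1-x)^{n-j},
\end{equation*}
and the substitution $j\mapsto n-j$ in the right-hand sum, call it $S(x)$, shows $S(1-x)=(-1)^nS(x)$; since the right side of~\eqref{simons} is $(-1)^n$ times the left side evaluated at $1-x$, this symmetry is precisely~\eqref{simons}. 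A still shorter route, using a tool the paper already quotes, is to set $u=n$ and $x=1+y$ in Gould's identity~\eqref{gould}, so that $(x-y)^{n-k}=1$ and one reads off $\sum_k\binom{n}{k}\binom{n+k}{k}y^k=\sum_k\binom{n}{k}^2(1+y)^{n-k}y^k$; substituting $y=-x$ identifies the left side of~\eqref{simons} with $S(x)$ directly, and the same symmetry finishes the proof. Your final remark --- that once~\eqref{simons} is established, Theorem~\ref{thm.frisch} applies mechanically with $f(k)=(-1)^k\binom{n}{k}\binom{n+k}{k}$, $g(k)=(-1)^{n-k}\binom{n}{k}\binom{n+k}{k}$ and $p(k)=q(k)=k$ --- is correct and matches the paper's subsequent use of the lemma.
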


\begin{proposition}
If $n$ is a non-negative integer and $r$ and $s$ are complex numbers such that $\Re(r-s+1)>0$ and $s$ is not a non-positive integer, then
\begin{equation}\label{eq.xt2yfl7}
\sum_{k = 0}^n {( - 1)^k \binom{{n}}{k}\binom{{n + k}}{k}\binom{{k + r}}{s}^{ - 1} }  = \sum_{k = 0}^n {\frac{{( - 1)^{n - k} s}}{{k + s}}\binom{{n}}{k}\binom{{n + k}}{k}\binom{{k + r}}{{k + s}}^{ - 1} } .
\end{equation}

\end{proposition}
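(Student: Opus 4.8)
The plan is to obtain~\eqref{eq.xt2yfl7} as the Frisch-type identity associated, in the sense of Theorem~\ref{thm.frisch}, with Simons's identity~\eqref{simons}. First I would recast~\eqref{simons} in the template form~\eqref{poly}: taking $l_1=n_1=0$, $l_2=n_2=n$, $p(k)=q(k)=k$, $f(k)=(-1)^k\binom nk\binom{n+k}k$ and $g(k)=(-1)^{n-k}\binom nk\binom{n+k}k$, the left- and right-hand sides of~\eqref{simons} are precisely $\sum_{k=l_1}^{l_2}f(k)x^{p(k)}$ and $\sum_{k=n_1}^{n_2}g(k)(1-x)^{q(k)}$. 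Since $\min(p(l_1),p(l_2))=\min(q(n_1),q(n_2))=0$, the hypotheses of Theorem~\ref{thm.frisch} here read $\Re(r-s+1)>0$ and $\Re(s)>0$.

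Next I would simply invoke~\eqref{eq.yndi3o2} with these data (equivalently, multiply~\eqref{simons} through by $x^{r-s}(1-x)^{s-1}$ and integrate term by term over $[0,1]$ via~\eqref{int1a} and~\eqref{int1b}). Substituting $f$, $g$, $p$, $q$ gives
\[
\sum_{k=0}^n (-1)^k\binom nk\binom{n+k}k\binom{k+r}{s}^{-1}
= s\sum_{k=0}^n \frac{(-1)^{n-k}}{k+s}\binom nk\binom{n+k}k\binom{k+r}{k+s}^{-1},
\]
which, after moving the factor $s$ inside the sum, is exactly~\eqref{eq.xt2yfl7}. As a sanity check I would also apply the companion identity~\eqref{eq.at6nk8c}: since here $p=q$ and $g(k)=(-1)^n f(k)$, that identity merely returns~\eqref{eq.xt2yfl7} multiplied through by $(-1)^n$, so it produces nothing new and nothing inconsistent.

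It remains to reconcile the hypothesis $\Re(s)>0$ furnished by Theorem~\ref{thm.frisch} with the stated hypothesis that $s$ be merely not a non-positive integer; this I would settle by analytic continuation, exactly as the passage from $\Re(s)>0$ to that weaker condition is handled for~\eqref{poly1} itself. Writing $\binom{k+r}{s}^{-1}=\Gamma(s+1)\Gamma(k+r-s+1)/\Gamma(k+r+1)$ and $\frac{s}{k+s}\binom{k+r}{k+s}^{-1}=s\,\Gamma(k+s)\Gamma(r-s+1)/\Gamma(k+r+1)$, one checks that both sides of~\eqref{eq.xt2yfl7} are meromorphic in $(r,s)$ and holomorphic on the connected open set $\{(r,s):\Re(r-s+1)>0,\ s\notin\mathbb Z_{\le 0}\}$; since they agree on the nonempty open subset where in addition $\Re(s)>0$, the identity theorem yields equality throughout. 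The computations are all routine; the only place that calls for attention is the bookkeeping in matching the convergence conditions of Theorem~\ref{thm.frisch} to those of the Proposition, together with this (standard) continuation argument.
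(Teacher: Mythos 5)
Your proposal is correct and follows essentially the same route as the paper: identify $f(k)=(-1)^k\binom nk\binom{n+k}k$, $g(k)=(-1)^{n-k}\binom nk\binom{n+k}k$, $p(k)=q(k)=k$ from Simons's identity~\eqref{simons} and substitute into~\eqref{eq.yndi3o2} of Theorem~\ref{thm.frisch}. The extra remarks on the companion identity~\eqref{eq.at6nk8c} and on analytic continuation in $s$ are sound but not part of the paper's (terser) argument.
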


\begin{proof}
Comparing~\eqref{poly} and~\eqref{simons}, we find
\begin{equation}\label{simons-p}
f(k) = ( - 1)^k \binom{{n}}{k}\binom{{n + k}}{k},\quad g(k) = ( - 1)^{n - k} \binom{{n}}{k}\binom{{n + k}}{k},
\end{equation}
$p(k)=k=q(k)$; and $l_1=0=n_1$ and $l_2=n=n_2$.

Using these in~\eqref{eq.yndi3o2} gives~\eqref{eq.xt2yfl7}.
\end{proof}
We can obtain a Frisch-type identity with two binomial coefficients in the denominator, directly from~\eqref{poly1}.
\begin{proposition}
If $n$ is a non-negative integer and $r$, $s$, $t$ and $u$ are complex numbers such that $\Re(r-s+1)>0$, $\Re(t-u+1)>0$ and $s$ and $u$ are not negative integers, then
\begin{align}
&\sum_{k = 0}^n {( - 1)^k \binom{{n}}{k}\binom{{k + r}}{s}^{ - 1} \binom{{k + t}}{u}^{ - 1} }\nonumber\\
&\qquad  = su\sum_{k = 0}^n {\frac{1}{{\left( {k + s} \right)\left( {n - k + u} \right)}}\binom{{n}}{k}\binom{{k + r}}{{k + s}}^{ - 1} \binom{{n + t}}{{n - k + u}}^{ - 1} } .
\end{align}
\end{proposition}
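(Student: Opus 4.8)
The plan is to obtain the identity from the polynomial identity~\eqref{poly1} by a single term-wise integration, in the spirit of Theorem~\ref{thm.frisch}, but integrating against a kernel chosen so that a \emph{second} inverse binomial coefficient is manufactured on each side. Concretely, I would integrate against $x^{t-u}(1-x)^{u-1}$, so that the left-hand side of~\eqref{poly1} (a pure power $x^k$) produces $\binom{k+t}{u}^{-1}$ via~\eqref{int1a}, while the right-hand side (which carries an extra factor) produces the $\binom{n+t}{n-k+u}^{-1}$ that appears in the statement.

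First I would replace $x$ by $-x$ in~\eqref{poly1}. Since $(-1)^k(-1)^k=1$, the factor $(1+x)^{n-k}$ turns into $(1-x)^{n-k}$ and the alternating sign moves to the left-hand side, giving the intermediate identity
\[
\sum_{k=0}^n (-1)^k\binom nk\binom{k+r}{s}^{-1}x^k = s\sum_{k=0}^n \frac{1}{k+s}\binom nk\binom{k+r}{k+s}^{-1}x^k(1-x)^{n-k}.
\]
Next I would multiply both sides by $x^{t-u}(1-x)^{u-1}$ and integrate term by term over $[0,1]$. On the left, the $k$-th integral is $\int_0^1 y^{t+k-u}(1-y)^{u-1}\,dy$, which is~\eqref{int1a} with $r,s$ there replaced by $t,u$, and equals $\tfrac1u\binom{k+t}{u}^{-1}$; the hypothesis $\Re(t-u+1)>0$ guarantees $\Re(t+k-u+1)>0$ for every $k\ge 0$. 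On the right, the $k$-th integral is $\int_0^1 y^{t+k-u}(1-y)^{(n-k+u)-1}\,dy$, which is again an instance of~\eqref{int1a}, now with $r\mapsto n+t-k$ and $s\mapsto n-k+u$, and equals $\tfrac{1}{n-k+u}\binom{n+t}{n-k+u}^{-1}$ (equivalently, both integrals are the Beta integrals $B(k+t-u+1,u)$ and $B(k+t-u+1,n-k+u)$, evaluated through the Gamma function). Assembling the two sides and multiplying through by $u$ yields the claimed identity.

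The argument is essentially a one-line integration against the right kernel, so there is no real obstacle; the only point needing a little care is bookkeeping of the domain of validity — checking that both Beta integrals converge for every $k\in\{0,1,\dots,n\}$ and that none of the resulting inverse binomial coefficients evaluates a $\Gamma$ at a non-positive integer. The first integral needs $\Re(t-u+1)>0$ with $u$ not a non-positive integer, and the second additionally needs $\Re(n-k+u)>0$ for all admissible $k$, which is secured once $u$ is not a negative integer (and $s$ is not a non-positive integer, so that the displayed rewriting of~\eqref{poly1} is legitimate); these are exactly the stated hypotheses, modulo the harmless degenerate case $su=0$. It is worth noting that the ``new'' feature — the appearance of $\binom{n+t}{n-k+u}^{-1}$ rather than a $\binom{k+t}{k+u}^{-1}$ — is produced automatically, simply because the $(1-x)$-exponent on the right of the intermediate identity is $n-k+u-1$ instead of $u-1$.
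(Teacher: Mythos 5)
Your proposal is correct and follows exactly the paper's own route: write $-x$ for $x$ in~\eqref{poly1}, multiply by $x^{t-u}(1-x)^{u-1}$, and integrate term by term over $[0,1]$ using the Beta integrals of Lemma~\ref{integrals}. The intermediate identity and the identifications of both resulting integrals are exactly right, so nothing further is needed.
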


\begin{proof}
Write $-x$ for $x$ in~\eqref{poly1}, multiply through by $x^{t-u}(1-x)^{u-1}$ and integrate with respect to $x$ from $0$ to $1$.
\end{proof}

In particular,
\begin{align}
&\sum_{k = 0}^n {( - 1)^k \binom{{n}}{k}\binom{{k + r}}{s}^{ - 2} }\nonumber\\
&\qquad  = s^2\sum_{k = 0}^n {\frac{1}{{\left( {k + s} \right)\left( {n - k + s} \right)}}\binom{{n}}{k}\binom{{k + r}}{{k + s}}^{ - 1} \binom{{n + r}}{{n - k + s}}^{ - 1} } 
\end{align}
and
\begin{equation}
\sum_{k = 0}^n {( - 1)^k \binom{{n}}{k}\binom{{k + r}}{r}^{ - 2} }  = r^2\sum_{k = 0}^n {\frac{1}{{\left( {k + r} \right)\left( {n - k + r} \right)}}\binom{{n}}{k}\binom{{n + r}}{{n - k + r}}^{ - 1} } .
\end{equation}
The reader is invited to discover the combinatorial identity having two binomial coefficients in the denominator associated with~\eqref{eq.edfhvvl} by making appropriate substitutions in Theorem~\ref{thm.frisch}.
\subsubsection{A generalization of Frisch's identity}
In Theorem~\ref{thm.frisch_gen}, we derive a generalization of Frisch's identity. We require the following known polynomial identity.
\begin{lemma}[{\cite[Identity (3.18), p.24]{gould}}]
If $n$ is a non-negative integer, $u$ is a complex number and $x$ and $y$ are complex variables, then
\begin{equation}\label{gould}
\sum_{k = 0}^n {\binom{{n}}{k}\binom{{u}}{k}x^{n - k} y^k }  = \sum_{k = 0}^n {\binom{{n}}{k}\binom{{u + k}}{k}\left( {x - y} \right)^{n - k} y^k }.
\end{equation}

\end{lemma}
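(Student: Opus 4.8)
The plan is to prove \eqref{gould} by comparing coefficients after a linear change of variable. Both sides of \eqref{gould} are polynomials in $x$ and $y$ of total degree $n$, so it suffices to check that the coefficient of each monomial $x^{n-m}y^{m}$ (for $0\le m\le n$) is the same on the two sides. First I would substitute $x=y+z$, turning \eqref{gould} into the claim
\begin{equation*}
\sum_{k = 0}^n {\binom nk\binom uk(y + z)^{n - k} y^k }  = \sum_{k = 0}^n {\binom nk\binom{u + k}{k}z^{n - k} y^k },
\end{equation*}
an identity between polynomials in the independent variables $y$ and $z$, and it is then enough to match the coefficients of $z^{n-m}y^{m}$.

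Next I would expand $(y+z)^{n-k}$ on the left via the binomial theorem: a typical term is $\binom nk\binom uk\binom{n-k}{j}z^{j}y^{\,n-j}$. The factor $z^{n-m}$ forces $j=n-m$, which makes the power of $y$ equal to $m$, subject to the constraint $j\le n-k$, i.e. $k\le m$. Since $\binom{n-k}{n-m}=\binom{n-k}{m-k}$, the coefficient of $z^{n-m}y^{m}$ on the left is
\begin{equation*}
\sum_{k = 0}^m {\binom nk\binom uk\binom{n - k}{m - k} }.
\end{equation*}
Applying the ``subset of a subset'' identity $\binom nk\binom{n-k}{m-k}=\binom nm\binom mk$ rewrites this as $\binom nm\sum_{k=0}^m\binom mk\binom uk$.

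It then remains to evaluate $\sum_{k=0}^m\binom mk\binom uk$. Writing $\binom mk=\binom m{m-k}$ and invoking the Vandermonde convolution $\sum_{k=0}^m\binom m{m-k}\binom uk=\binom{u+m}{m}$ shows that the coefficient of $z^{n-m}y^{m}$ on the left equals $\binom nm\binom{u+m}{m}$, which is exactly its coefficient on the right (it comes from the single term $k=m$ there). The only point needing a word of justification is that Vandermonde's convolution is valid for the complex parameter $u$: for fixed $m$, both sides are polynomials in $u$ agreeing at all non-negative integers $u$, hence agreeing identically. This passage from integer to complex $u$, together with the bookkeeping of which monomials survive the substitution $x=y+z$, is the only mildly delicate ingredient; I do not expect any substantial obstacle.
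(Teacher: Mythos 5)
Your proof is correct. Note that the paper does not actually prove this lemma at all --- it is quoted verbatim from Gould's \emph{Combinatorial Identities} (Identity (3.18)) with only a citation --- so your argument is a genuine addition rather than an alternative to anything in the text. The structure is sound: the substitution $x=y+z$ reduces the claim to an identity between polynomials in the independent variables $y$ and $z$; both sides are homogeneous of degree $n$, so matching the coefficients of $z^{n-m}y^{m}$ for $0\le m\le n$ is exhaustive; the extraction of that coefficient on the left via $\binom{n-k}{n-m}=\binom{n-k}{m-k}$, the trinomial revision $\binom nk\binom{n-k}{m-k}=\binom nm\binom mk$, and the Vandermonde evaluation $\sum_{k=0}^m\binom mk\binom uk=\binom{u+m}{m}$ are all applied correctly, and you rightly flag that the last step needs the polynomial-identity argument to pass from non-negative integer $u$ to complex $u$ (both sides are polynomials in $u$ of degree $m$ agreeing at infinitely many points). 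The only cosmetic remark is that the same polynomial-in-$u$ reasoning could have been invoked once at the very start to reduce the whole lemma to integer $u$, but where you placed it is equally valid.
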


\begin{theorem}\label{thm.frisch_gen}
If $n$ is a non-negative integer, $u$ is a complex number and $r$ and $s$ are complex numbers such that $\Re(r-s+1)>0$ and $s$ is not a negative integer, then
\begin{align}
\sum_{k = 0}^n {\binom{{n}}{k}\binom{{u}}{{n - k}}\binom{{k + r}}{s}^{-1}}  &= \sum_{k = 0}^n {\frac{{( - 1)^k s}}{{k + s}}\binom{{n}}{k}\binom{{u + n - k}}{u}\binom{{k + r}}{{k + s}}^{ - 1} },\label{eq.ezz26ve}\\ 
\sum_{k = 0}^n {( - 1)^k \binom{{n}}{k}\binom{{u + n - k}}{u}\binom{{k + r}}{s}^{ - 1} } & = \sum_{k = 0}^n {\frac{s}{{k + s}}\binom{{n}}{k}\binom{{u}}{{n - k}}\binom{{k + r}}{{k + s}}^{ - 1} } \label{eq.n07dz4r}.
\end{align}

\end{theorem}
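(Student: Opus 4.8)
The plan is to manufacture a polynomial identity of the form~\eqref{poly} out of Gould's identity~\eqref{gould} and then feed it directly into Theorem~\ref{thm.frisch}. Concretely, in~\eqref{gould} I would first replace the summation index $k$ by $n-k$ and use $\binom{u+n-k}{n-k}=\binom{u+n-k}{u}$ to rewrite it as
\begin{equation*}
\sum_{k=0}^n \binom{n}{k}\binom{u}{n-k}x^{k}y^{n-k} = \sum_{k=0}^n \binom{n}{k}\binom{u+n-k}{u}(x-y)^{k}y^{n-k},
\end{equation*}
and then specialize to $y=1$ and use $(x-1)^k=(-1)^k(1-x)^k$, obtaining
\begin{equation*}
\sum_{k=0}^n \binom{n}{k}\binom{u}{n-k}x^{k} = \sum_{k=0}^n (-1)^k\binom{n}{k}\binom{u+n-k}{u}(1-x)^{k}.
\end{equation*}

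This is precisely of the shape~\eqref{poly} with $f(k)=\binom{n}{k}\binom{u}{n-k}$, $g(k)=(-1)^k\binom{n}{k}\binom{u+n-k}{u}$, $p(k)=q(k)=k$, and $l_1=n_1=0$, $l_2=n_2=n$. Since $\min(p(l_1),p(l_2))=\min(q(n_1),q(n_2))=0$, Theorem~\ref{thm.frisch} applies whenever $\Re(r-s+1)>0$ and $\Re(s)>0$: substituting this data into~\eqref{eq.yndi3o2} yields~\eqref{eq.ezz26ve}, and substituting it into~\eqref{eq.at6nk8c} yields~\eqref{eq.n07dz4r}, after only trivial rearrangement of the $s/(k+s)$ factors.

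It then remains to relax the hypothesis $\Re(s)>0$ to the stated ``$s$ is not a negative integer''. For this I would invoke analytic continuation in $s$ with $r$, $u$, $n$ held fixed and $\Re(r-s+1)>0$ maintained: each summand on both sides of~\eqref{eq.ezz26ve} and~\eqref{eq.n07dz4r} is meromorphic in $s$ and, inside the half-plane $\Re(s)<\Re(r)+1$, has possible poles only at negative integers (the factor $s/(k+s)$ is regular at $s=0$, and the Gamma factors implicit in $\binom{k+r}{s}^{-1}$ and $\binom{k+r}{k+s}^{-1}$ contribute, within this half-plane, poles only at negative integers). Hence both sides are holomorphic on the connected open set $\{\Re(s)<\Re(r)+1\}\setminus\mathbb{Z}^{-}$ and agree on the nonempty open subset $\{0<\Re(s)<\Re(r)+1\}$, so they agree throughout, which is the claimed range.

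The only real decision here is the first one: recognizing that reindexing $k\mapsto n-k$ and then setting $y=1$ converts~\eqref{gould} into a type-\eqref{poly} identity with the ``right'' $f$ and $g$; after that, everything is bookkeeping, and the analytic continuation is a routine technicality (which could instead be absorbed by simply stating Theorem~\ref{thm.frisch} under the weaker hypothesis). One small point I would verify is that $\binom{u}{n-k}$ and $\binom{u+n-k}{u}$ are ordinary binomial coefficients, polynomial in $u$, because $n-k\in\mathbb{Z}_{\ge 0}$, so no issue of convergence or branch in $u$ arises.
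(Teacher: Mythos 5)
Your proof is correct and follows essentially the same route as the paper: both specialize Gould's identity~\eqref{gould} at $y=1$ and feed the resulting type-\eqref{poly} identity into Theorem~\ref{thm.frisch}, the only difference being that the paper performs the reindexing $k\mapsto n-k$ after applying the theorem rather than before. Your closing analytic-continuation remark also addresses the discrepancy between the hypothesis $\Re(s)>0$ inherited from Theorem~\ref{thm.frisch} and the stated condition that $s$ merely avoid the negative integers, a point the paper passes over in silence.
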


\begin{proof}
Setting $y=1$ in~\eqref{gould} gives
\begin{equation*}
\sum_{k = 0}^n {\binom{{n}}{k}\binom{{u}}{k}x^{n - k} }  = \sum_{k = 0}^n {( - 1)^{n - k} \binom{{n}}{k}\binom{{u + k}}{k}\left( {1 - x} \right)^{n - k} } ,
\end{equation*}
so that comparing with~\eqref{poly}, we choose
\begin{equation}\label{gould2}
f(k)=\binom nk\binom uk,\quad g(k)=(-1)^{n-k}\binom nk\binom {u+k}k,
\end{equation}
$p(k)=n-k=q(k)$, $l_1=0=n_1$ and $l_2=n=n_2$.

Substituting these in~\eqref{eq.yndi3o2} and~\eqref{eq.at6nk8c} gives
\begin{equation*}
\sum_{k = 0}^n {\binom{{n}}{k}\binom{{u}}{k}\binom{{n - k + r}}{s}^{ - 1} }  = \sum_{k = 0}^n {\frac{{( - 1)^{n - k} s}}{{n - k + s}}\binom{{n}}{k}\binom{{u + k}}{k}\binom{{n - k + r}}{{r - s}}^{ - 1} } 
\end{equation*}
and
\begin{equation*}
\sum_{k = 0}^n {( - 1)^{n - k} \binom{{n}}{k}\binom{{u + k}}{k}\binom{{n - k + r}}{s}^{ - 1} }  = \sum_{k = 0}^n {\frac{s}{{n - k + s}}\binom{{n}}{k}\binom{{u}}{k}\binom{{n - k + r}}{{r - s}}^{ - 1} } ,
\end{equation*}
which can also be written in the equivalent forms~\eqref{eq.ezz26ve} and~\eqref{eq.n07dz4r}
\end{proof}

\begin{remark}
Frisch's identity~\eqref{frisch} is obtained by setting $u=-1$ in~\eqref{eq.ezz26ve} or $u=0$ in~\eqref{eq.n07dz4r}.
\end{remark}

\subsection{Klamkin-type combinatorial identities}

\begin{theorem}\label{klamkin-type}
Let $r$ and $s$ be complex numbers such that $s$ is not a negative integer, $\Re(r-\max(q(n_1),q(n_2))-s+1)>0$ and $\Re(r-\max(p(l_1),p(l_2))-s+1)>0$; where $\max(a,b)$ picks the greater of $a$ and $b$. If a polynomial identity has the form~\eqref{poly}, then the following combinatorial identities hold:
\begin{align}
\sum_{k = n_1 }^{n_2 } {( - 1)^{q(k)} g(k)\binom{{r}}{{q(k) + s}}^{ - 1} }  &= \left( {r + 1} \right)\sum_{k = l_1 }^{l_2 } {\frac{{f(k)}}{{r - p(k) + 1}}\binom{{r - p(k)}}{s}^{ - 1} }, \label{eq.w1gtshd}\\
\sum_{k = l_1 }^{l_2 } {( - 1)^{p(k)} f(k)\binom{{r}}{{p(k) + s}}^{ - 1} }  &= \left( {r + 1} \right)\sum_{k = n_1 }^{n_2 } {\frac{{g(k)}}{{r - q(k) + 1}}\binom{{r - q(k)}}{s}^{ - 1} } .
\end{align}

\end{theorem}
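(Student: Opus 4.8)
The plan is to imitate the proof of Theorem~\ref{thm.frisch}, inserting a fractional-linear change of variable before integrating so that~\eqref{int2a} takes over the role that~\eqref{int1a}--\eqref{int1b} played there. The feature of~\eqref{int2a} to exploit is that it matches a factor $t^{k+s}$ against a factor $(1-t)^{r-k-s}$ whose exponents add up to the fixed number $r$. To coax the summands of~\eqref{poly} into that shape, I would substitute $x=\dfrac{1}{1-t}$ in~\eqref{poly}, with $t$ a fresh complex variable. Because~\eqref{poly} is a polynomial identity in $x$ it survives the substitution, and since $1-x$ becomes $\dfrac{-t}{1-t}$ it turns into
\[
\sum_{k=l_1}^{l_2} f(k)\,(1-t)^{-p(k)} \;=\; \sum_{k=n_1}^{n_2} (-1)^{q(k)}\, g(k)\, t^{\,q(k)}\,(1-t)^{-q(k)} .
\]

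I would then multiply both sides by $t^{s}(1-t)^{r-s}$, giving
\[
\sum_{k=l_1}^{l_2} f(k)\, t^{s}(1-t)^{r-p(k)-s} \;=\; \sum_{k=n_1}^{n_2} (-1)^{q(k)}\, g(k)\, t^{\,q(k)+s}(1-t)^{r-q(k)-s},
\]
and integrate term by term over $[0,1]$; since the sums are finite this is legitimate once each integral converges. On the left, each integral equals $\int_0^1 t^{s}(1-t)^{(r-p(k))-s}\,dt=\frac{1}{r-p(k)+1}\binom{r-p(k)}{s}^{-1}$, which is~\eqref{int2a} with $r$ replaced by $r-p(k)$ and the remaining parameter equal to $0$; on the right, each integral equals $\int_0^1 t^{q(k)+s}(1-t)^{r-q(k)-s}\,dt=\frac{1}{r+1}\binom{r}{q(k)+s}^{-1}$, which is~\eqref{int2a} with that parameter equal to $q(k)$. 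The two $\max$-inequalities in the hypothesis keep the exponent of $1-t$ above $-1$ in every summand, so these Beta integrals all converge. Collecting terms and multiplying through by $r+1$ produces~\eqref{eq.w1gtshd}. For the companion identity I would invoke the transposition principle recorded in the proof of Theorem~\ref{thm.frisch}: an identity derived from~\eqref{poly} is unchanged under $p(k)\leftrightarrow q(k)$, $f(k)\leftrightarrow g(k)$, $l_1\leftrightarrow n_1$, $l_2\leftrightarrow n_2$, with $r$ and $s$ fixed. The hypotheses of the present theorem are symmetric under this exchange, so applying it to~\eqref{eq.w1gtshd} gives the second identity. (Equivalently, one may rerun the computation with $x=\dfrac{t}{t-1}$ in place of $x=\dfrac{1}{1-t}$.)

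I expect the real content to sit in two spots. The first is recognizing that $x=\dfrac{1}{1-t}$ is the substitution that simultaneously sends $x^{p(k)}$ to $(1-t)^{-p(k)}$ and $(1-x)^{q(k)}$ to $(-1)^{q(k)}t^{q(k)}(1-t)^{-q(k)}$, which is exactly what drops the integrand into the $t^{k+s}(1-t)^{r-k-s}$ template of~\eqref{int2a}; after that it is the same bookkeeping as in Theorem~\ref{thm.frisch}. The second is a mismatch of ranges: the integration step really wants $\Re(s)>-1$, so that $t^{s}$ is integrable at $0$, which is slightly stronger than the stated hypothesis that $s$ merely avoid the negative integers. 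I would bridge this by analytic continuation: for fixed $r$ and fixed combinatorial data, both sides of~\eqref{eq.w1gtshd} are holomorphic in $s$ on the connected region cut out by the two $\Re$-inequalities with the negative integers deleted, and they coincide on the open subset where $\Re(s)>-1$, hence on the whole region.
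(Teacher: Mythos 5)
Your proof is correct and follows essentially the same route as the paper: the substitution $x\mapsto 1/(1-t)$, multiplication by $t^{s}(1-t)^{r-s}$, and term-wise integration via~\eqref{int2a} is exactly what the author does (phrased there as writing $1/x$ for $x$), with the second identity obtained from the same $f\leftrightarrow g$, $p\leftrightarrow q$ transposition. Your closing observation that the integrals actually require $\Re(s)>-1$, repaired by analytic continuation in $s$, addresses a point the paper glosses over and is a worthwhile refinement rather than a departure in method.
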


\begin{proof}
Write $1/x$ for $x$ in~\eqref{poly} to obtain
\begin{equation*}
\sum\limits_{k = n_1 }^{n_2 } {( - 1)^{q(k)} g(k)x^{q(k)} \left( {1 - x} \right)^{r - q(k)} }  = \sum\limits_{k = l_1 }^{l_2 } {f(k)\left( {1 - x} \right)^{r - p(k)} },
\end{equation*}
from which, by multiplying through with $x^s(1-x)^{r-s}$, we get
\begin{equation*}
\sum\limits_{k = n_1 }^{n_2 } {( - 1)^{q(k)} g(k)x^{q(k) + s} \left( {1 - x} \right)^{r - q(k) - s} }  = \sum\limits_{k = l_1 }^{l_2 } {f(k)x^s \left( {1 - x} \right)^{r - p(k) - s} },
\end{equation*}
and hence~\eqref{eq.w1gtshd} after term-wise integration.
\end{proof}

\begin{proposition}
If $n$ is a non-negative integer and $r$, $s$, $t$ and $u$ are complex numbers such that $\Re(r-n-s)>0$, $\Re(t-n-u+1)>0$ and $s$ and $t$ are not negative integers, then
\begin{align}
&\sum_{k = 0}^n {\frac{1}{{t - k + 1}}{\binom{{n}}{k}}\binom{{t - k}}{{t - u - n}}^{ - 1} \binom{{r}}{{n - k + s}}^{ - 1} }\nonumber\\  &\qquad= \frac{{r + 1}}{{t + 1}}\sum_{k = 0}^n {\frac{1}{{r - k + 1}}{\binom{{n}}{k}}\binom{{t}}{{k + u}}^{ - 1} \binom{r - k}s^{-1}} \label{eq.jlvoaqz}
\end{align}
and
\begin{align}
&\sum_{k = 0}^n {( - 1)^k \binom{{n}}{k}\binom{{t}}{{k + u}}^{ - 1} \binom{{r}}{{k + s}}^{ - 1} }\nonumber\\  &\qquad= \left( {r + 1} \right)\left( {t + 1} \right)\sum_{k = 0}^n {\frac{{( - 1)^{n - k} \binom{{n}}{k}}}{{\left( {t - k + 1} \right)\left( {r - n + k + 1} \right)}}\binom{{t - k}}{{t - u - n}}^{-1}\binom{{r - n + k}}{s}}^{-1}\label{eq.nu5rgwf} .
\end{align}

\end{proposition}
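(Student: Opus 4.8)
The plan is to obtain both identities by feeding identity~\eqref{poly2} itself into Theorem~\ref{klamkin-type}, after renaming its parameters $r,s$ to $t,u$ --- mirroring the way the earlier two-denominator Frisch-type proposition was obtained directly from~\eqref{poly1}. After that renaming, \eqref{poly2} reads
\begin{equation*}
\sum_{k = 0}^n {\binom nk\binom t{k + u}^{-1} x^k}  = \left( {t + 1} \right)\sum_{k = 0}^n {\frac{(-1)^{n-k}}{t - k + 1}\binom nk\binom{t - k}{t - u - n}^{-1}(1-x)^{n - k} } ,
\end{equation*}
and this is a polynomial identity of the form~\eqref{poly} with
\begin{equation*}
f(k)=\binom nk\binom t{k+u}^{-1},\qquad g(k)=(t+1)\,\frac{(-1)^{n-k}}{t-k+1}\binom nk\binom{t-k}{t-u-n}^{-1},
\end{equation*}
$p(k)=k$, $q(k)=n-k$, and $l_1=0=n_1$, $l_2=n=n_2$. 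Here $\max(p(l_1),p(l_2))=\max(q(n_1),q(n_2))=n$, so that, under the hypotheses of the proposition, Theorem~\ref{klamkin-type} applies to this polynomial identity (and the renamed~\eqref{poly2} is itself valid).

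I would then substitute these data into~\eqref{eq.w1gtshd}. On its left-hand side the factor $(-1)^{q(k)}=(-1)^{n-k}$ meets the $(-1)^{n-k}$ already sitting inside $g(k)$, so all signs disappear ($(-1)^{n-k}(-1)^{n-k}=1$) and the constant $t+1$ factors out; the left-hand binomial shift is $\binom{r}{q(k)+s}^{-1}=\binom{r}{n-k+s}^{-1}$. On the right-hand side one has $f(k)/(r-p(k)+1)=\binom nk\binom t{k+u}^{-1}/(r-k+1)$ and the extra inverse binomial $\binom{r-p(k)}{s}^{-1}=\binom{r-k}{s}^{-1}$. Dividing through by $t+1$ gives exactly~\eqref{eq.jlvoaqz}. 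Applying instead the companion identity of Theorem~\ref{klamkin-type} (the one with $(-1)^{p(k)}f(k)$ on the left and $g(k)/(r-q(k)+1)$ on the right) to the same data, one gets $(-1)^{p(k)}=(-1)^k$ on the left, and on the right $r-q(k)+1=r-n+k+1$, $\binom{r-q(k)}{s}^{-1}=\binom{r-n+k}{s}^{-1}$, together with the two surviving constants $r+1$ and $t+1$; collecting these is precisely~\eqref{eq.nu5rgwf}.

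There is really no hard step here: the whole argument is a single specialization of an already-established theorem. The one thing to watch is the bookkeeping --- namely that $p(k)=k$ but $q(k)=n-k$, so that the roles of ``$k$'' and ``$n-k$'' are interchanged between the two sides; that the sign carried by $g(k)$ is exactly what neutralizes $(-1)^{q(k)}$ in~\eqref{eq.jlvoaqz} while combining with $(-1)^{p(k)}$ to leave $(-1)^{n-k}$ in~\eqref{eq.nu5rgwf}; and that the shifts $q(k)+s=n-k+s$ and $r-q(k)=r-n+k$ are propagated correctly. A convenient consistency check is to specialize $t$ and $u$ so that $\binom t{k+u}^{-1}$ (equivalently $\binom{t-k}{t-u-n}^{-1}$) becomes trivial: then~\eqref{eq.jlvoaqz} and~\eqref{eq.nu5rgwf} should collapse to the single-denominator Klamkin-type identities of Theorem~\ref{klamkin-type}, and a further specialization should recover Klamkin's identity~\eqref{klamkin}.
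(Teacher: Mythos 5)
Your proposal is correct and follows exactly the paper's own route: rewrite~\eqref{poly2} with $(r,s)$ renamed to $(t,u)$, identify $f(k)=\binom nk\binom t{k+u}^{-1}$, $g(k)=\frac{(-1)^{n-k}(t+1)}{t-k+1}\binom nk\binom{t-k}{t-u-n}^{-1}$, $p(k)=k$, $q(k)=n-k$, and substitute into the two identities of Theorem~\ref{klamkin-type}. The sign and index bookkeeping you describe matches the paper's computation, so nothing further is needed.
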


\begin{proof}
Write~\eqref{poly2} as
\begin{equation*}
\sum_{k = 0}^n {\binom nk\binom t{k + u}^{-1} x^k}  = \left( {t + 1} \right)\sum_{k = 0}^n {\frac{(-1)^{n-k}}{t - k + 1}\binom nk\binom{t - k}{t - u - n}^{-1}(1-x)^{n - k} } .
\end{equation*}
Consider~\eqref{poly} and identify
\begin{equation*}
f(k) = \binom{{n}}{k}\binom{{t}}{{k + u}}^{ - 1} ,\quad g(k) = \frac{{( - 1)^{n - k} \left( {t + 1} \right)}}{{t - k + 1}}\binom{{n}}{k}\binom{{t - k}}{{t - u - n}}^{-1},
\end{equation*}
$p(k)=k$, $q(k)=n-k$ and $l_1=n_1=0$ and $l_2=n_2=n$.

Use these in~\eqref{eq.w1gtshd} to obtain~\eqref{eq.jlvoaqz}.

\end{proof}
\begin{proposition}
If $n$ is a non-negative integer and $r$ and $s$ are complex numbers such that $\Re(r-s-n+1)>0$ and $s$ is not a negative integer, then
\begin{align}
&\sum_{k = 0}^n {( - 1)^k \binom{{n}}{k}\binom{{r}}{{k + s}}^{ - 2} }  \nonumber\\
&\qquad= \frac{{\left( {r + 1} \right)^2 }}{{\left( {r - s - n + 1} \right)\left( {s + 1} \right)}}\sum_{k = 0}^n {( - 1)^{n - k} \binom{{n}}{k}\binom{{r - k + 1}}{{r - s - n + 1}}^{ - 1} \binom{{r - n + k + 1}}{{s + 1}}^{ - 1} } .
\end{align}
\end{proposition}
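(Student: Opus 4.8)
The plan is to obtain this identity as a direct specialization of~\eqref{eq.nu5rgwf}. Setting $t=r$ and $u=s$ there, the product $\binom{t}{k+u}^{-1}\binom{r}{k+s}^{-1}$ in the left-hand summand collapses to $\binom{r}{k+s}^{-2}$, so the left-hand side becomes the left-hand side of the proposition verbatim, and the prefactor $(r+1)(t+1)$ becomes $(r+1)^2$.

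It then remains only to rewrite the right-hand summand. I would use the relation $\binom{u+1}{v+1}=\frac{u+1}{v+1}\binom{u}{v}$, already recorded in the proof of Lemma~\ref{integrals}, in its inverted form
\[
\frac{1}{u+1}\binom{u}{v}^{-1}=\frac{1}{v+1}\binom{u+1}{v+1}^{-1}.
\]
Applying this twice — once with $(u,v)=(r-k,\,r-s-n)$ and once with $(u,v)=(r-n+k,\,s)$ — turns the factor
\[
\frac{1}{(r-k+1)(r-n+k+1)}\binom{r-k}{r-s-n}^{-1}\binom{r-n+k}{s}^{-1}
\]
into
\[
\frac{1}{(r-s-n+1)(s+1)}\binom{r-k+1}{r-s-n+1}^{-1}\binom{r-n+k+1}{s+1}^{-1}.
\]
Pulling the now $k$-independent constant $\dfrac{(r+1)^2}{(r-s-n+1)(s+1)}$ out of the sum produces exactly the stated right-hand side. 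This part is entirely routine algebra and presents no real obstacle.

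The only point requiring care is the range of validity. Specializing~\eqref{eq.nu5rgwf} with $t=r$, $u=s$ invokes the hypothesis $\Re(r-n-s)>0$ (and $r$ not a negative integer), whereas the proposition asks only for $\Re(r-s-n+1)>0$ with $s$ not a negative integer. To cover the full stated range I would observe that, with $s$ fixed, both sides of the claimed identity are meromorphic functions of $r$ on the half-plane $\Re(r-s-n+1)>0$ (every inverse binomial coefficient being a ratio of Gamma functions), and they agree on the open subset $\Re(r-n-s)>0$; hence they agree throughout by analytic continuation. Alternatively, one can bypass continuation altogether by running the integration argument underlying Theorem~\ref{klamkin-type} directly on~\eqref{poly2} after the substitution $x\mapsto 1/x$, which yields the weaker condition intrinsically; but the specialization route above is the shortest, and this mild bookkeeping about hypotheses is the extent of the difficulty.
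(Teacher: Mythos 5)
Your proposal is correct and matches the paper's proof exactly: the paper also obtains this proposition by setting $t=r$ and $u=s$ in~\eqref{eq.nu5rgwf}, with the rewriting via $\binom{u+1}{v+1}=\frac{u+1}{v+1}\binom{u}{v}$ left implicit. Your additional remark on reconciling the hypothesis $\Re(r-n-s)>0$ with the stated $\Re(r-s-n+1)>0$ by analytic continuation is a careful touch the paper does not bother with.
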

\begin{proof}
Set $t=r$ and $u=s$ in~\eqref{eq.nu5rgwf}.
\end{proof}
Our next result is a Klamkin-type identity derived from the identity of Simons~\eqref{simons}.
\begin{proposition}
Let $n$ be a non-negative integer. If $r$ and $s$ are complex numbers such that $\Re(r-n-s+1)>0$ and $s$ is not a negative integer, then
\begin{align}
&\sum_{k = 0}^n {\binom{{n}}{k}\binom{{n + k}}{k}\binom{{r}}{{k + s}}^{ - 1} }\nonumber\\
&\qquad  = \left( {r + 1} \right)( - 1)^n \sum_{k = 0}^n {\frac{(-1)^k}{{r - k + 1}}\binom nk\binom{{n + k}}{k}\binom{{r - k}}{s}^{ - 1} } .
\end{align}
\end{proposition}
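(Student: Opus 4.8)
The plan is to recognize Simons' identity~\eqref{simons} as an instance of the template~\eqref{poly} and then substitute the resulting data into Theorem~\ref{klamkin-type}. Comparing~\eqref{simons} with~\eqref{poly}, one reads off
\[
f(k) = (-1)^k\binom nk\binom{n+k}k,\qquad g(k) = (-1)^{n-k}\binom nk\binom{n+k}k,
\]
together with $p(k)=q(k)=k$ and $l_1=n_1=0$, $l_2=n_2=n$. Since $p$ and $q$ are increasing on $\{0,1,\dots,n\}$, we have $\max(p(l_1),p(l_2))=\max(q(n_1),q(n_2))=n$, so both real-part hypotheses of Theorem~\ref{klamkin-type} collapse to $\Re(r-n-s+1)>0$, which is precisely what is assumed in the proposition; and $s$ is not a negative integer by hypothesis, so Theorem~\ref{klamkin-type} applies.

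Next I would plug this data into identity~\eqref{eq.w1gtshd} (the first of the two displayed identities in Theorem~\ref{klamkin-type}). On the left-hand side the factor is $(-1)^{q(k)}g(k) = (-1)^k(-1)^{n-k}\binom nk\binom{n+k}k = (-1)^n\binom nk\binom{n+k}k$, so the global sign $(-1)^n$ comes out of the sum; on the right-hand side one has $f(k)/(r-p(k)+1) = (-1)^k\binom nk\binom{n+k}k/(r-k+1)$. Multiplying the resulting equation through by $(-1)^n$ then gives exactly the claimed identity.

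The computation is entirely routine. The only points requiring a little care are the sign bookkeeping arising from the two different parities $(-1)^k$ and $(-1)^{n-k}$ that~\eqref{simons} attaches to $f$ and $g$, and the choice to invoke~\eqref{eq.w1gtshd} rather than its companion, so that $\binom{r-k}s^{-1}$ (and not $\binom{r-n+k}s^{-1}$) appears on the right. No genuine obstacle arises.
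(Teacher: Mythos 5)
Your proposal is correct and is essentially identical to the paper's own proof, which likewise reads off the data $f$, $g$, $p$, $q$ from Simons' identity and substitutes it into Theorem~\ref{klamkin-type}; your sign bookkeeping $(-1)^{q(k)}g(k)=(-1)^n\binom nk\binom{n+k}k$ and the final multiplication by $(-1)^n$ are exactly right.
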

\begin{proof}
Use $f(k)$, $g(k)$, etc.~given in~\eqref{simons-p} in Theorem~\ref{klamkin-type}.
\end{proof}
The reader is invited to discover the combinatorial identity having two binomial coefficients in the denominator associated with~\eqref{eq.edfhvvl} by making appropriate substitutions in Theorem~\ref{klamkin-type}.

\subsubsection{A generalization of Klamkin's identity}
We close this section by giving a generalization of Klamkin's identity.
\begin{theorem}
If $n$ is a non-negative integer, $u$ is a complex number and $r$ and $s$ are complex numbers such that $\Re(r-n-s+1)>0$ and $s$ is not a negative integer, then
\begin{align}
\sum_{k = 0}^n {\binom{{n}}{k}\binom{{u + n - k}}{{n - k}}\binom{{r}}{{k + s}}^{ - 1} }  = \left( {r + 1} \right)\sum_{k = 0}^n {\frac{1}{{r - k + 1}}\binom{{n}}{k}\binom{{u}}{{n - k}}\binom{{r - k}}{s}^{ - 1} },\label{eq.m3pmjwe} \\
\sum_{k = 0}^n {( - 1)^k \binom{{n}}{k}\binom{{u}}{{n - k}}\binom{{r}}{{k + s}}^{ - 1} }  = \left( {r + 1} \right)\sum_{k = 0}^n {\frac{{( - 1)^k }}{{r - k + 1}}\binom{{n}}{k}\binom{{u + n - k}}{{n - k}}\binom{{r - k}}{s}^{ - 1} }\label{eq.wt9v34o}. 
\end{align}
\end{theorem}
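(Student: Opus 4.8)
The plan is to run the argument of Theorem~\ref{thm.frisch_gen} a second time, now feeding the same polynomial identity into the Klamkin-type scheme of Theorem~\ref{klamkin-type} instead of into the Frisch-type scheme. Thus I would start from Gould's identity~\eqref{gould} specialized at $y=1$,
\[
\sum_{k=0}^n\binom nk\binom uk\,x^{n-k}=\sum_{k=0}^n(-1)^{n-k}\binom nk\binom{u+k}k(1-x)^{n-k},
\]
which is of the form~\eqref{poly} with the data recorded in~\eqref{gould2}, namely $f(k)=\binom nk\binom uk$, $g(k)=(-1)^{n-k}\binom nk\binom{u+k}k$, $p(k)=q(k)=n-k$, and $l_1=n_1=0$, $l_2=n_2=n$. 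Because $n-k$ runs over $\{0,1,\dots,n\}$ we have $\max(p(l_1),p(l_2))=\max(q(n_1),q(n_2))=n$, so the hypotheses of Theorem~\ref{klamkin-type} reduce exactly to the two conditions imposed here: $\Re(r-n-s+1)>0$ and $s$ not a negative integer.

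Next I would substitute this data into the two conclusions of Theorem~\ref{klamkin-type}. In~\eqref{eq.w1gtshd} the factor $(-1)^{q(k)}=(-1)^{n-k}$ cancels the sign inside $g(k)$, leaving
\[
\sum_{k=0}^n\binom nk\binom{u+k}k\binom{r}{n-k+s}^{-1}=(r+1)\sum_{k=0}^n\frac1{r-n+k+1}\binom nk\binom uk\binom{r-n+k}s^{-1},
\]
and the companion conclusion of Theorem~\ref{klamkin-type} gives the same identity with $(-1)^{n-k}$ attached to $\binom nk\binom uk$ on the left and to $\binom nk\binom{u+k}k$ on the right. Reversing the summation index via $k\mapsto n-k$ in each identity, and using $\binom nk=\binom n{n-k}$, converts the first into~\eqref{eq.m3pmjwe} and the second into~\eqref{eq.wt9v34o}; the exponents are arranged so that $\binom{u+k}k$ turns into exactly the $\binom{u+n-k}{n-k}$ appearing in the statement and $(-1)^{n-k}$ turns into $(-1)^k$, so unlike in the Frisch-type case no further rewriting of binomial coefficients is required.

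I do not expect a genuine obstacle: once Theorem~\ref{klamkin-type} is in hand this is a purely mechanical substitution, and the only points needing attention are the bookkeeping of the index reversal and checking that $\max(p(l_1),p(l_2))=n$ so that the convergence conditions match those in the statement. As a sanity check, setting $u=0$ in~\eqref{eq.m3pmjwe} (respectively $u=-1$ in~\eqref{eq.wt9v34o}) kills every term on the right except $k=n$ and recovers Klamkin's identity~\eqref{klamkin}.
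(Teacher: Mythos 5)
Your proposal is correct and is exactly the paper's proof: the paper also obtains both identities by substituting the data of~\eqref{gould2} (from Gould's identity at $y=1$) into the two conclusions of Theorem~\ref{klamkin-type}, with the same index reversal $k\mapsto n-k$ implicit in the final rewriting. Your verification of the hypotheses and the $u=0$ sanity check are consistent with the paper's remark following the theorem.
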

\begin{proof}
Use $f(k)$, $g(k)$, etc.~given in~\eqref{gould2} in Theorem~\ref{klamkin-type}.

\end{proof}

\begin{remark}
Klamkin's identity~\eqref{klamkin} is obtained by setting $u=0$ in~\eqref{eq.m3pmjwe}, while $u=0$ in~\eqref{eq.wt9v34o} recovers~\eqref{eq.h2ql1u6}.
\end{remark}

\section{Extensions of Frisch's identity and Klamkin's identity}

In this section we derive a closed form for the following combinatorial sums:
\begin{equation*}
\sum_{k = 0}^n {(-1)^kk^m\binom nk\binom{k + r}s^{-1}},\quad \sum_{k = 0}^n {(-1)^kk^m\binom nk\binom{r}{k + s}^{-1}},
\end{equation*}
where $m$ is a non-negative integer; thereby providing an extension for each of Frisch's identity~\eqref{frisch} and Klamkin's identity~\eqref{klamkin}, the latter corresponding to $m=0$. For this purpose we require the results stated in Lemma~\ref{m-derivatives}.
\begin{lemma}\label{m-derivatives}
If $m$, $u$ and $v$ are non-negative integers, then
\begin{equation}\label{eflzmiz}
\left. {\frac{{d^m }}{{dx^m }}\left( {{1 - e^x }^u } \right)} \right|_{x = 0}  = \sum_{p = 0}^u {( - 1)^p \binom{{u}}{p}p^m }=(-1)^uu!\braces mu
\end{equation}
and
\begin{equation}\label{eq.vompp34}
\left. {\frac{{d^m }}{{dx^m }}\left( {\left( {1 - e^x } \right)^u e^{vx} } \right)} \right|_{x = 0}  = \sum_{p = 0}^u {( - 1)^p \binom{{u}}{p}\left( {v + p} \right)^m }=(-1)^uu!\braces{m+v}{u+v}_v ,
\end{equation}
where $\braces mn$ and $\braces{m+r}{n+r}_r$ are, respectively, Stirling numbers of the second kind and $r$-Stirling numbers of the second kind.  Details about these special numbers can be found in Laissaoui and Rahmani~\cite{laissaoui17} and references therein.

\end{lemma}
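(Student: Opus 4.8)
The plan is to prove identity~\eqref{eq.vompp34} in full and then to obtain~\eqref{eflzmiz} as the special case $v=0$, since the $0$-Stirling numbers of the second kind reduce to the ordinary ones, $\braces mu_0=\braces mu$ (the defining restriction on the first $v$ elements becomes vacuous when $v=0$). So it suffices to treat the two-equality chain in~\eqref{eq.vompp34}.

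The first equality in~\eqref{eq.vompp34} is a direct computation. First I would expand $(1-e^x)^u$ by the binomial theorem, obtaining $(1-e^x)^u e^{vx}=\sum_{p=0}^u(-1)^p\binom up e^{(v+p)x}$; this is a finite sum of entire functions, so I may differentiate term by term, and $\frac{d^m}{dx^m}e^{(v+p)x}=(v+p)^m e^{(v+p)x}$. Evaluating at $x=0$ then gives $\sum_{p=0}^u(-1)^p\binom up(v+p)^m$, which is the middle member of~\eqref{eq.vompp34}; putting $v=0$ here already yields the first equality of~\eqref{eflzmiz}.

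For the second equality I would invoke the explicit finite-difference representation of the $r$-Stirling numbers of the second kind, $\braces{m+v}{u+v}_v=\frac1{u!}\sum_{p=0}^u(-1)^{u-p}\binom up(v+p)^m$, which is standard (see Laissaoui and Rahmani~\cite{laissaoui17} and the references therein). Pulling the factor $(-1)^u$ out of the alternating sum converts $(-1)^{u-p}$ into $(-1)^p$ and gives $\sum_{p=0}^u(-1)^p\binom up(v+p)^m=(-1)^u u!\braces{m+v}{u+v}_v$. If one prefers a self-contained derivation, the same identity drops out of the exponential generating function $(e^x-1)^u e^{vx}=u!\sum_{m\ge0}\braces{m+v}{u+v}_v\,x^m/m!$: writing $(1-e^x)^u=(-1)^u(e^x-1)^u$ and recalling that the $m$th derivative at $0$ of an analytic function equals $m!$ times its coefficient of $x^m$ reproduces the middle and right members of~\eqref{eq.vompp34} at once, and specializing $v=0$ delivers~\eqref{eflzmiz}.

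There is no genuine obstacle here: the argument is routine once the binomial theorem and termwise differentiation are in place. The only points that require care are bookkeeping ones: aligning the indices of the $r$-Stirling explicit formula with the parameters $m$, $u$, $v$, tracking the sign $(-1)^u$ correctly, and — on the generating-function route — quoting the standard EGF of the $r$-Stirling numbers in the normalization used above.
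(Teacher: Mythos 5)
Your proposal is correct and follows essentially the same route as the paper: expand $(1-e^x)^u e^{vx}$ by the binomial theorem, differentiate term by term, and evaluate at $x=0$, with \eqref{eflzmiz} as the case $v=0$. The only difference is that you also justify the final equality with the $r$-Stirling numbers via their explicit finite-difference formula (or EGF), whereas the paper leaves that step implicit, deferring to the cited reference of Laissaoui and Rahmani.
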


\begin{proof}

Since
\begin{equation*}
\left( {1 - e^x } \right)^u e^{vx}  = \sum_{p = 0}^u {( - 1)^p \binom{{u}}{p}e^{(v + p)x} } ;
\end{equation*}
we have
\begin{equation*}
\frac{{d^m }}{{dx^m }}\left( {\left( {1 - e^x } \right)^u e^{vx} } \right) = \sum_{p = 0}^u {( - 1)^p \binom{{u}}{p}\left( {v + p} \right)^m e^{(v + p)x} };
\end{equation*}
and hence~\eqref{eq.vompp34}.

\end{proof}

We will often make use of the property
\begin{equation*}
\braces mn=0,\text{ if $n>m$.}
\end{equation*}

\subsection{An extension of Frisch's identity}

\begin{proposition}\label{prop.dfd96f9}
If $m$ and $n$ are non-negative integers and $r$ and $s$ are complex numbers such that $\Re(r-s+1)>0$ and $s$ is not a non-positive integer, then
\begin{align}
&\sum_{k = 0}^n {( - 1)^k k^m \binom{{n}}{k}\binom{{k + r}}{s}^{ - 1} } \nonumber\\
&\qquad = \sum_{k = 0}^m {\frac{{ s(-1)^kk!}}{{n - k + s}}\braces{m+n-k}n_{n-k}\binom{{n}}{k}\binom{{n - k + r}}{{n - k + s}}^{ - 1} }\label{extension} .
\end{align}
\end{proposition}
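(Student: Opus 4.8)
The plan is to obtain \eqref{extension} by applying the differential operator $\theta^{m}$, where $\theta=x\,\frac{d}{dx}$, to both sides of the polynomial identity \eqref{poly1} and then evaluating at $x=-1$. Since each side of \eqref{poly1} is a finite sum of monomials in $x$ with constant coefficients, term‑by‑term differentiation is legitimate and no analytic subtleties arise. On the left‑hand side, because $\theta x^{k}=k\,x^{k}$ we have $\theta^{m}x^{k}=k^{m}x^{k}$, so $\theta^{m}$ sends $\sum_{k=0}^{n}\binom nk\binom{k+r}{s}^{-1}x^{k}$ to $\sum_{k=0}^{n}k^{m}\binom nk\binom{k+r}{s}^{-1}x^{k}$; setting $x=-1$ then produces exactly the left‑hand side of \eqref{extension}.

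For the right‑hand side of \eqref{poly1}, the coefficients $\frac{(-1)^{k}s}{k+s}\binom nk\binom{k+r}{k+s}^{-1}$ are constants, so $\theta^{m}$ acts only on the factor $x^{k}(1+x)^{n-k}$. To evaluate $\theta^{m}\bigl[x^{k}(1+x)^{n-k}\bigr]$ at $x=-1$ I would substitute $x=-e^{t}$; since $\frac{dx}{dt}=x$, the chain rule (iterated $m$ times) gives $\theta^{m}\bigl[x^{k}(1+x)^{n-k}\bigr]\big|_{x=-1}=(-1)^{k}\frac{d^{m}}{dt^{m}}\bigl[e^{kt}(1-e^{t})^{n-k}\bigr]\big|_{t=0}$, because $x=-1$ corresponds to $t=0$. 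Lemma~\ref{m-derivatives}, used in the form \eqref{eq.vompp34} with $u=n-k$ and $v=k$ (so $u+v=n$), evaluates the last quantity to $(-1)^{k}(-1)^{n-k}(n-k)!\braces{m+k}{n}_{k}$.

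Substituting this back and combining with the left‑hand side computed above, \eqref{poly1} becomes $\sum_{k=0}^{n}(-1)^{k}k^{m}\binom nk\binom{k+r}{s}^{-1}=\sum_{k=0}^{n}\frac{(-1)^{n+k}s\,(n-k)!}{k+s}\braces{m+k}{n}_{k}\binom nk\binom{k+r}{k+s}^{-1}$. Reindexing the right‑hand sum by $k\mapsto n-k$ and simplifying the sign $(-1)^{2n-k}=(-1)^{k}$ turns it into $\sum_{k=0}^{n}\frac{(-1)^{k}s\,k!}{n-k+s}\braces{m+n-k}{n}_{n-k}\binom nk\binom{n-k+r}{n-k+s}^{-1}$. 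Finally, the vanishing property of the $r$‑Stirling numbers — $\braces{m'+r'}{n'+r'}_{r'}=0$ whenever $n'>m'$, here with $n'=k$, $m'=m$, $r'=n-k$ — kills every term with $k>m$, so the upper summation limit may be lowered to $m$, which is precisely \eqref{extension}.

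The hard part here is not conceptual but is entirely bookkeeping: correctly matching the parameters $u,v$ in \eqref{eq.vompp34}, keeping track of the signs introduced by the substitution $x=-e^{t}$, carrying out the reindexing $k\mapsto n-k$, and recognising that the vanishing of the $r$‑Stirling numbers truncates the sum at $k=m$. The single step that deserves an explicit line of justification is the identity $\theta^{m}|_{x=-1}=\frac{d^{m}}{dt^{m}}|_{t=0}$ under $x=-e^{t}$, which follows from $\frac{dx}{dt}=x$ by the chain rule.
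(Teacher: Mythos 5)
Your proof is correct and is essentially the paper's own argument: applying $\theta^m$ with $\theta=x\,d/dx$ and evaluating at $x=-1$ is exactly the paper's substitution of $-e^x$ for $x$ followed by $m$-fold differentiation at $x=0$, and both arguments then invoke \eqref{eq.vompp34} and the reindexing $k\mapsto n-k$ (you reindex after differentiating, the paper before — a cosmetic difference). The bookkeeping of signs, the parameter matching in the lemma, and the truncation of the sum at $k=m$ via the vanishing of the $r$-Stirling numbers are all handled correctly.
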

In particular,
\begin{equation}
\sum_{k = 0}^n {( - 1)^k k\binom{{n}}{k}\binom{{k + r}}{s}^{ - 1} }  = \frac{{ns}}{{n + r}}\,\frac{{s - r - 1}}{{n + s - 1}}\,\binom{{n + r - 1}}{{n + s - 1}}^{ - 1} 
\end{equation}
and
\begin{equation}
\sum_{k = 0}^n {( - 1)^k k^2 \binom{{n}}{k}\binom{{k + r}}{s}^{ - 1} }  = \frac{{ns}}{{n + r}}\,\frac{{r - s + 1}}{{r + n - 1}}\,\frac{{n\left( {r - s + 1} \right) - r}}{{n + s - 2}}\,\binom{{n + r - 2}}{{n + s - 2}}^{ - 1} ,
\end{equation}
with the special values
\begin{equation}
\sum_{k = 0}^n {( - 1)^k k\binom{{n}}{k}\binom{{k + r}}{r}^{ - 1} }  =  - \frac{{nr}}{{\left( {n + r - 1} \right)\left( {n + r} \right)}}
\end{equation}
and
\begin{equation}
\sum_{k = 0}^n {( - 1)^k k^2 \binom{{n}}{k}\binom{{k + r}}{r}^{ - 1} }  =  \frac{{nr\left( {n - r} \right)}}{{\left( {n + r} \right)\left( {n + r - 1} \right)\left( {n + r - 2} \right)}}.
\end{equation}

\begin{proof}
Write $-\exp x$ for $x$ in~\eqref{poly1} and differentiate $m$ times with respect to $x$ to obtain
\begin{align*}
&\sum_{k = 0}^n {( - 1)^k k^m \binom{{n}}{k}\binom{{k + r}}{s}^{ - 1} e^{kx} } \\
&\qquad = \sum_{k = 0}^n {\frac{s}{{k + s}}\binom{{n}}{k}\binom{{k + r}}{{k + s}}^{ - 1} \frac{{d^m }}{{dx^m }}\left(\left( {1 - e^x } \right)^{n - k} e^{kx}\right) } \\
&\qquad = \sum_{k = 0}^n {\frac{s}{{n - k + s}}\binom{{n}}{k}\binom{{n - k + r}}{{n - k + s}}^{ - 1} \frac{{d^m }}{{dx^m }}\left(\left( {1 - e^x } \right)^k e^{(n-k)x}\right) }.
\end{align*}
Evaluation at $x=0$ gives
\begin{align*}
&\sum_{k = 0}^n {( - 1)^k k^m \binom{{n}}{k}\binom{{k + r}}{s}^{ - 1} }\\
&\qquad  = \sum_{k = 0}^n {\frac{s}{{n - k + s}}\binom{{n}}{k}\binom{{n - k + r}}{{n - k + s}}^{ - 1} \left. {\frac{{d^m }}{{dx^m }}\left( {1 - e^x } \right)^k e^{(n - k)x} } \right|_{x = 0} } \\
&\qquad  = \sum_{k = 0}^m {\frac{s}{{n - k + s}}\binom{{n}}{k}\binom{{n - k + r}}{{n - k + s}}^{ - 1} \left. {\frac{{d^m }}{{dx^m }}\left( {1 - e^x } \right)^k e^{(n - k)x} } \right|_{x = 0} },
\end{align*}
and hence~\eqref{extension}, in view of Lemma~\ref{m-derivatives}.

\end{proof}

\subsection{An extension of Klamkin's identity}
\begin{proposition}
If $m$ and $n$ are non-negative integers and $r$ and $s$ are complex numbers such that $\Re(r-n-s+1)>0$ and $s$ is not a negative integer, then
\begin{align}
&\sum_{k = 0}^n {k^m \binom{{n}}{k}\binom{{r}}{{k + s}}^{ - 1} }\nonumber\\
&\qquad  = \left( {r + 1} \right)\sum_{k = 0}^m {\frac{{k!}}{{r - n + k + 1}}\binom{{n}}{k}\binom{{k + r - n}}{{k + s}}^{ - 1}\braces mk } .
\end{align}
\end{proposition}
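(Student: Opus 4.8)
The plan is to follow the template of the proof of Proposition~\ref{prop.dfd96f9}, with~\eqref{poly2} playing the part that~\eqref{poly1} played there. First I would write $e^x$ for $x$ in~\eqref{poly2}, obtaining an identity between entire functions of $x$:
\begin{equation*}
\sum_{k = 0}^n {\binom nk\binom r{k + s}^{-1} e^{kx}}  = \left( {r + 1} \right)\sum_{k = 0}^n {\frac{(-1)^{n-k}}{r - k + 1}\binom nk\binom{r - k}{r - s - n}^{-1}\left(1-e^x\right)^{n - k} } .
\end{equation*}
Because both sides are finite linear combinations of entire functions, differentiating $m$ times with respect to $x$ and then evaluating at $x = 0$ is legitimate term by term. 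On the left this produces $k^m$, since $\frac{d^m}{dx^m}e^{kx}$ at $x=0$ equals $k^m$; on the right the only $x$-dependence sits in $(1-e^x)^{n-k}$, and by~\eqref{eflzmiz} of Lemma~\ref{m-derivatives} we have $\left.\frac{d^m}{dx^m}(1-e^x)^{n-k}\right|_{x=0} = (-1)^{n-k}(n-k)!\braces{m}{n-k}$.

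After this step the left-hand side is already $\sum_{k=0}^n k^m\binom nk\binom r{k+s}^{-1}$, while the right-hand side has become
\begin{equation*}
\left( {r + 1} \right)\sum_{k = 0}^n {\frac{(n-k)!}{r - k + 1}\binom nk\binom{r - k}{r - s - n}^{-1}\braces{m}{n-k} } ,
\end{equation*}
the two factors $(-1)^{n-k}$ having cancelled. I would then reverse the summation index via $k\mapsto n-k$, using $\binom n{n-k}=\binom nk$; this turns the denominator into $r-n+k+1$, the factorial into $k!$, the Stirling symbol into $\braces mk$, and the binomial into $\binom{r-n+k}{r-s-n}^{-1}$. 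Finally, applying the symmetry $\binom ab = \binom a{a-b}$ of the generalized binomial coefficient (immediate from the Gamma-function definition) with $a = r-n+k$ and $b = r-s-n$, for which $a-b = k+s$, rewrites this last factor as $\binom{k+r-n}{k+s}^{-1}$. Since $\braces mk = 0$ for $k>m$, the sum may be truncated at $k=m$ (or, when $m>n$, extended to $k=m$, the extra terms vanishing because $\binom nk = 0$), which is exactly the asserted identity.

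I do not anticipate a real obstacle: the argument runs exactly parallel to Proposition~\ref{prop.dfd96f9}, and the only places needing a word of justification are the term-by-term differentiation (trivial, the sums being finite), the index reversal, and the appeal to the symmetry of the generalized binomial coefficient to reconcile the shape $\binom{r-k}{r-s-n}^{-1}$ coming from~\eqref{int2b} with the shape $\binom{k+r-n}{k+s}^{-1}$ stated in the Proposition. As a sanity check, at $m=0$ one has $\braces00 = 1$, the right-hand side collapses to the single term $\frac{r+1}{r-n+1}\binom{r-n}{s}^{-1}$, and Klamkin's identity~\eqref{klamkin} is recovered.
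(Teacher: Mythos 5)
Your argument is correct and is exactly what the paper intends: its proof of this proposition is the single line ``Write $\exp x$ for $x$ in~\eqref{poly2} and proceed as in the proof of Proposition~\ref{prop.dfd96f9},'' and your substitution, $m$-fold differentiation at $x=0$ via~\eqref{eflzmiz}, index reversal $k\mapsto n-k$, and use of the symmetry $\binom{r-n+k}{r-s-n}=\binom{k+r-n}{k+s}$ fill in precisely those steps. No gaps.
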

In particular,
\begin{equation}
\sum_{k = 0}^n {k\binom{{n}}{k}\binom{{r}}{{k + s}}^{ - 1} }  = \frac{{n\left( {r + 1} \right)}}{{\left( {r - n + 2} \right)}}\binom{{r - n + 1}}{{s + 1}}^{-1},
\end{equation}
and
\begin{equation}
\sum_{k = 0}^n {k^2 \binom{{n}}{k}\binom{{r}}{{k + s}}^{ - 1} }  = \frac{{\left( {r + 1} \right)n\left( {n\left( {s + 1} \right) + r - s + 1} \right)}}{{\left( {n - r - 2} \right)\left( {n - r - 3} \right)}}\binom{{r - n + 1}}{{s + 1}}^{ - 1} .
\end{equation}
\begin{proof}
Write $\exp x$ for $x$ in~\eqref{poly2} and proceed as in the proof of Proposition~\ref{prop.dfd96f9}.
\end{proof}

\subsection{Related identities derived from polynomial identities of a certain type}

\begin{theorem}\label{thm.an0mkzw}
Let a polynomial identity have the following form:
\begin{equation}\label{eq.xtlkg2f}
\sum\limits_{k = 0}^n {f(k)x^k }  = \sum\limits_{k = 0}^n {g(k)\left( {1 - x} \right)^k },
\end{equation}
where $n$ is a non-negative integer, $x$ is a complex variable and $f(k)$ and $g(k)$ are sequences. If $m$ is a non-negative integer, then
\begin{align}
\sum_{k = 0}^n {k^m f(k)} & = \sum_{k = 0}^m {(-1)^kk!\braces mkg(k) },\label{eq.rkgu74f}\\ 
\sum_{k = 0}^n {k^m g(k)}  &= \sum_{k = 0}^m {(-1)^kk!\braces mkf(k) } \label{eq.s3sdwv2} ,\\
\sum_{k = 0}^n {k^m f(n - k)}  &= \sum_{k = 0}^m {k!\braces {n+m-k}n_{n-k}g(k) }\label{eq.aft8sl8} ,
\end{align}
and
\begin{equation}\label{eq.ik85aw8}
\sum_{k = 0}^n {k^m g(n - k)}  = \sum_{k = 0}^m {k!\braces {n+m-k}n_{n-k}f(k) } .
\end{equation}

\end{theorem}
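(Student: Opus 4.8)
The plan is to differentiate the polynomial identity \eqref{eq.xtlkg2f} after a suitable exponential substitution and then evaluate at $x=0$, exactly as in the proof of Proposition~\ref{prop.dfd96f9}, but now carried out abstractly for the generic sequences $f$ and $g$. First I would establish \eqref{eq.rkgu74f}: substitute $x=e^t$ into \eqref{eq.xtlkg2f} to get
\begin{equation*}
\sum_{k=0}^n f(k)e^{kt} = \sum_{k=0}^n g(k)(1-e^t)^k,
\end{equation*}
apply $\frac{d^m}{dt^m}$ to both sides, and set $t=0$. The left side becomes $\sum_{k=0}^n k^m f(k)$ directly. For the right side, the $k$-th term is $g(k)$ times $\left.\frac{d^m}{dt^m}(1-e^t)^k\right|_{t=0}$, which by \eqref{eflzmiz} equals $(-1)^k k!\,\braces mk$; since $\braces mk=0$ for $k>m$, the sum truncates at $k=m$, giving \eqref{eq.rkgu74f}. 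Identity \eqref{eq.s3sdwv2} then follows immediately by the symmetry of \eqref{eq.xtlkg2f}: replacing $x$ by $1-x$ interchanges the roles of $f$ and $g$, so \eqref{eq.rkgu74f} applied to the swapped identity yields \eqref{eq.s3sdwv2}.

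Next I would handle \eqref{eq.aft8sl8}, which is the "reversed-index" analogue. The natural move is to rewrite the left-hand sum $\sum_{k=0}^n k^m f(n-k)$ after re-indexing $j=n-k$ as $\sum_{j=0}^n (n-j)^m f(j)$, but it is cleaner to work directly with a substitution tuned to produce $f(n-k)$. Following the second displayed computation in the proof of Proposition~\ref{prop.dfd96f9}, I would use the substitution $x=e^t$ together with the reindexing that turns $\sum_k g(k)(1-e^t)^k$ into $\sum_k g(n-k)(1-e^t)^{n-k}$, or equivalently multiply through / rearrange so that the term carrying $f(n-k)$ picks up a factor $(1-e^t)^k e^{(n-k)t}$. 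Applying $\frac{d^m}{dt^m}$ and evaluating at $t=0$, the relevant derivative is $\left.\frac{d^m}{dt^m}\big((1-e^t)^k e^{(n-k)t}\big)\right|_{t=0}$, which by \eqref{eq.vompp34} with $u=k$ and $v=n-k$ equals $(-1)^k k!\,\braces{m+n-k}{n}_{n-k}$. Reconciling the sign with the stated identity (which has no $(-1)^k$) will require care: one writes $f(n-k)$ in terms of the expansion of $(xy+1-y)^n$-type identities as in the proofs of the theorems, so that the alternating sign is already absorbed into the coefficient relating $f$ and $g$ — concretely, the polynomial identity read "in reverse" as $\sum g(k)x^k = \sum f(k)(1-x)^k$ feeds \eqref{eq.vompp34} in the form that produces the unsigned $r$-Stirling number. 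Once \eqref{eq.aft8sl8} is in hand, \eqref{eq.ik85aw8} follows by the same $f\leftrightarrow g$ symmetry used to deduce \eqref{eq.s3sdwv2} from \eqref{eq.rkgu74f}.

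The one genuine obstacle is bookkeeping the signs and the direction of reindexing in \eqref{eq.aft8sl8}: there are two independent "flips" available — the $x\mapsto 1-x$ symmetry of \eqref{eq.xtlkg2f} and the $k\mapsto n-k$ reindexing of a finite sum — and getting the factor $(-1)^k$ to cancel (rather than appear) in the final formula depends on applying them in the correct combination and on invoking \eqref{eq.vompp34} rather than \eqref{eflzmiz}. I would pin this down by tracking a single generic term through the substitution $x=e^t$ explicitly, verifying against the $m=0$ case (where the right-hand side collapses to $\sum_k g(n-k)\cdot[\,k=0\,]$-type behaviour, since $\braces{n-k}{n}_{n-k}=\delta_{k,0}$ up to the $r$-Stirling normalization) before trusting the general argument. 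The remaining steps are routine: \eqref{eq.rkgu74f} is a direct differentiation, and \eqref{eq.s3sdwv2} and \eqref{eq.ik85aw8} are free once their partners are proved, using only the evident symmetry of the hypothesis \eqref{eq.xtlkg2f}.
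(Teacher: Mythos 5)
Your treatment of \eqref{eq.rkgu74f} and \eqref{eq.s3sdwv2} is correct and is exactly the paper's argument: substitute $x=e^t$ into \eqref{eq.xtlkg2f}, differentiate $m$ times, evaluate at $t=0$ using \eqref{eflzmiz}, truncate via $\braces mk=0$ for $k>m$, and invoke the $f\leftrightarrow g$ symmetry. The gap is in \eqref{eq.aft8sl8}. You correctly identify that the relevant derivative is $\left.\frac{d^m}{dt^m}\bigl((1-e^t)^k e^{(n-k)t}\bigr)\right|_{t=0}=(-1)^k k!\,\braces{m+n-k}{n}_{n-k}$ via \eqref{eq.vompp34}, but you never produce the algebraic manipulation that makes this expression appear with a compensating $(-1)^k$, and the two candidate fixes you offer do not work. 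Reindexing $k\mapsto n-k$ inside $\sum_k g(k)(1-e^t)^k$ only relabels a finite sum and leads back to \eqref{eq.rkgu74f}; and reading the identity ``in reverse'' as $\sum g(k)x^k=\sum f(k)(1-x)^k$ merely swaps $f$ and $g$, which is the route to \eqref{eq.s3sdwv2}, not to the reversed-argument sum $\sum_k k^m f(n-k)$. Your suggestion that the sign is ``absorbed into the coefficient relating $f$ and $g$'' has no content here: the theorem assumes nothing about $f$ and $g$ beyond \eqref{eq.xtlkg2f} itself, so no sign can be hidden in their relationship.

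The missing step is concrete: replace $x$ by $1/x$ in \eqref{eq.xtlkg2f} and multiply through by $x^n$. Since $(1-1/x)^k x^n=(x-1)^k x^{n-k}=(-1)^k(1-x)^k x^{n-k}$, this yields
\begin{equation*}
\sum_{k=0}^n f(k)\,x^{n-k}=\sum_{k=0}^n (-1)^k g(k)\,(1-x)^k x^{n-k}.
\end{equation*}
Now put $x=e^t$, differentiate $m$ times and set $t=0$: the left side becomes $\sum_k (n-k)^m f(k)=\sum_k k^m f(n-k)$, while on the right the prefactor $(-1)^k$ cancels the $(-1)^k$ coming from \eqref{eq.vompp34} with $u=k$, $v=n-k$, leaving $\sum_k k!\,\braces{m+n-k}{n}_{n-k}\,g(k)$; the sum truncates at $k=m$ because $\braces{m+n-k}{n}_{n-k}=0$ for $k>m$. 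This single substitution simultaneously produces the reversed argument on the left and kills the sign on the right, which is precisely the bookkeeping obstacle you flagged but did not resolve. Your $m=0$ sanity check is sound, and \eqref{eq.ik85aw8} does indeed follow from \eqref{eq.aft8sl8} by the $f\leftrightarrow g$ symmetry, as you say.
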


\begin{proof}
Write $\exp x$ for $x$ in~\eqref{eq.xtlkg2f}, differentiate $m$ times with respect to $x$ and evaluate at $x=0$, using Lemma~\ref{m-derivatives}; this gives~\eqref{eq.rkgu74f}. To drive~\eqref{eq.aft8sl8}, write $1/x$ for $x$ in~\eqref{eq.xtlkg2f}, multiply through by $x^n$, replace $x$ by $\exp x$, differentiate $m$ times with respect to $x$ and evaluate at $x=0$, using Lemma~\ref{m-derivatives}. Identities~\eqref{eq.s3sdwv2} and~\eqref{eq.ik85aw8} follow from the $f(k)\leftrightarrow g(k)$ symmetry of~\eqref{eq.xtlkg2f}.
\end{proof}

\section{More combinatorial identities}
This section contains further identities based on the integration formulas in Lemma~\ref{integrals}, the evaluated derivatives in Lemma~\ref{m-derivatives} and the identities stated in Theorem~\ref{thm.an0mkzw}.
\subsection{Identities derived from the geometric progression}
\begin{proposition}
If $n$ is a non-negative integer and $r$ and $s$ are complex numbers excluding the set of non-positive integers, then
\begin{equation}\label{geometric}
\sum_{k = 0}^n {\binom{{k + r}}{s}^{ - 1} }  = \frac{s}{{s - 1}}\left(\binom{{r - 1}}{{s - 1}}^{ - 1}  - \binom{{n + r}}{{s - 1}}^{ - 1}\right) .
\end{equation}
\end{proposition}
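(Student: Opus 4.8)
The plan is to recognize the left-hand side as a finite geometric-type series once we pass to an integral representation of the reciprocal binomial coefficient. Concretely, I would start from the integration formula \eqref{int1a} in Lemma~\ref{integrals}, which gives
\begin{equation*}
\binom{k+r}{s}^{-1} = s\int_0^1 y^{r+k-s}(1-y)^{s-1}\,dy,
\end{equation*}
valid under the stated positivity hypothesis; for general complex $r,s$ outside the non-positive integers the final identity then follows by analytic continuation, since both sides are meromorphic in $r$ and $s$. Summing over $k$ from $0$ to $n$ and interchanging the (finite) sum with the integral yields
\begin{equation*}
\sum_{k=0}^n \binom{k+r}{s}^{-1} = s\int_0^1 y^{r-s}(1-y)^{s-1}\sum_{k=0}^n y^k\,dy.
\end{equation*}

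The inner sum is a genuine finite geometric progression, $\sum_{k=0}^n y^k = \dfrac{1-y^{n+1}}{1-y}$, so the integrand becomes $y^{r-s}(1-y)^{s-2}\bigl(1-y^{n+1}\bigr)$. Splitting into two pieces gives
\begin{equation*}
\sum_{k=0}^n \binom{k+r}{s}^{-1} = s\left(\int_0^1 y^{r-s}(1-y)^{s-2}\,dy - \int_0^1 y^{n+1+r-s}(1-y)^{s-2}\,dy\right).
\end{equation*}
Each of these two integrals is again of the Beta-function type \eqref{int1a} with $s$ replaced by $s-1$: the first has exponent parameters corresponding to $k=-1$ (so it equals $\tfrac{1}{s-1}\binom{r-1}{s-1}^{-1}$), and the second to $k=n$ (so it equals $\tfrac{1}{s-1}\binom{n+r}{s-1}^{-1}$). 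Substituting these evaluations produces exactly the right-hand side of \eqref{geometric}.

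The only genuine subtlety — and the step I would be most careful about — is the bookkeeping of convergence and analytic continuation: the Beta integrals with exponent $(1-y)^{s-2}$ converge only for $\Re(s)>1$, whereas \eqref{geometric} is asserted for all $r,s$ outside the non-positive integers (including, after the $s\to 1$ limit, the removable singularity where the prefactor $s/(s-1)$ blows up but the difference of reciprocal binomials vanishes to compensate). I would handle this by first establishing the identity for $\Re(s)>1$ and $\Re(r-s+1)>0$ via the integral computation above, then noting that both sides are ratios of Gamma functions, hence meromorphic in $(r,s)$, and that they agree on an open set; therefore they agree identically wherever both are defined, and the apparent pole at $s=1$ is removable on the right-hand side. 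Everything else is routine manipulation of the Beta function and the elementary identity $\binom{u+1}{v+1}=\tfrac{u+1}{v+1}\binom{u}{v}$ already invoked in the paper.
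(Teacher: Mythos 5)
Your proof is correct and follows essentially the same route as the paper: multiply the finite geometric series by $x^{r-s}(1-x)^{s-1}$, integrate termwise, and evaluate the two resulting Beta integrals via \eqref{int1a} with $s$ replaced by $s-1$. Your added remarks on convergence for $\Re(s)>1$ and extension by analytic continuation are a careful touch that the paper leaves implicit.
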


\begin{proof}
Multiply through the sum of the geometric progression:
\begin{equation}\label{geom-sum}
\sum_{k=0}^n x^k=\frac{1 - x^{n+1}}{1 - x},
\end{equation}
by $x^{r-s}(1 - x)^{s - 1}$ to obtain
\begin{equation}
\sum\limits_{k = 0}^n {x^{k + r - s} \left( {1 - x} \right)^{s - 1} }  = x^{r - s} \left( {1 - x} \right)^{s - 2}  - x^{n + r - s + 1} \left( {1 - x} \right)^{s - 2},
\end{equation}
and hence~\eqref{geometric} by termwise integration.
\end{proof}

\begin{remark}
The special case $s=r$ of~\eqref{geometric} was proved by Rockett~\cite{rocket81}.
\end{remark}
\begin{proposition}
If $n$ is a non-negative integer, $s$ is a complex number excluding the set of negative integers and $r$ is a complex number such that $\Re(r-n-s+1)>0$, then
\begin{equation}\label{eq.p9s6a2a}
\sum_{k = 0}^n {( - 1)^k \binom{{r}}{{k + s}}^{ - 1} }  = \frac{{r + 1}}{{s + 1}}\binom{{r + 2}}{{s + 1}}^{ - 1}  + ( - 1)^n \frac{{r + 1}}{{n + s + 2}}\binom{{r + 2}}{{n + s + 2}}^{ - 1} .
\end{equation}
\end{proposition}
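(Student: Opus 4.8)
The plan is to mimic the proof of \eqref{geometric}, replacing the plain geometric series by the alternating one. Since the summand on the left of \eqref{eq.p9s6a2a} is $\binom{r}{k+s}^{-1}$, the natural integral representation to use is \eqref{int2a}, i.e.\ $\binom{r}{k+s}^{-1}=(r+1)\int_0^1 y^{k+s}(1-y)^{r-k-s}\,dy$. The one new wrinkle compared with \eqref{geometric} is that the exponent of $(1-y)$ now depends on the summation index $k$, so rather than multiplying the geometric sum by a fixed power of $x$ and $(1-x)$ I first write $x^{k+s}(1-x)^{r-k-s}=x^{s}(1-x)^{r-s}\bigl(x/(1-x)\bigr)^{k}$ and sum a geometric series of ratio $-x/(1-x)$.

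Carrying this out, the closed form of the geometric series, using $1-\bigl(-x/(1-x)\bigr)=1/(1-x)$, is $\sum_{k=0}^n\bigl(-x/(1-x)\bigr)^k=(1-x)+(-1)^n x^{n+1}/(1-x)^n$, and multiplying by $x^{s}(1-x)^{r-s}$ yields the identity
\[
\sum_{k=0}^n (-1)^k x^{k+s}(1-x)^{r-k-s}=x^{s}(1-x)^{r-s+1}+(-1)^n x^{n+s+1}(1-x)^{r-s-n}.
\]
Integrating term by term over $[0,1]$: the left side becomes $\frac{1}{r+1}\sum_{k=0}^n(-1)^k\binom{r}{k+s}^{-1}$ by \eqref{int2a}, while the two terms on the right are the Beta integrals $B(s+1,r-s+2)$ and $(-1)^n B(n+s+2,r-s-n+1)$, which the Beta--Gamma evaluation in Lemma~\ref{integrals} turns into $\frac{1}{s+1}\binom{r+2}{s+1}^{-1}$ and $\frac{(-1)^n}{n+s+2}\binom{r+2}{n+s+2}^{-1}$. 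Multiplying through by $r+1$ gives \eqref{eq.p9s6a2a} exactly.

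The computation is routine; the only points needing care are the sign and exponent bookkeeping when collapsing the $x/(1-x)$ geometric sum, and the legitimacy of the termwise integration. The latter is immediate when $\Re(s+1)>0$ together with the assumed $\Re(r-n-s+1)>0$, and then extends to the full stated range ($s\notin\mathbb{Z}^-$, $\Re(r-n-s+1)>0$) by analytic continuation, since both sides are meromorphic in $s$ with poles confined to the excluded negative integers. I do not expect any genuine obstacle beyond this.
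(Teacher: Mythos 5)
Your proposal is correct and follows essentially the same route as the paper: the paper likewise substitutes $-x$ and then $x/(1-x)$ into the geometric sum \eqref{geom-sum}, multiplies by $x^{s}(1-x)^{r-s}$ to reach exactly the display $\sum_{k=0}^n (-1)^k x^{k+s}(1-x)^{r-k-s}=x^{s}(1-x)^{r-s+1}+(-1)^n x^{n+s+1}(1-x)^{r-s-n}$, and integrates term-wise via \eqref{int2a} and the Beta--Gamma evaluation. Your sign and exponent bookkeeping checks out, and your remark on justifying termwise integration for $\Re(s+1)>0$ and extending by analytic continuation is a point the paper passes over silently.
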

\begin{proof}
Write $-x$ for $x$ in~\eqref{geom-sum}, replace $x$ with $x/(1-x)$ and multiply through by $x^s(1-x)^{r-n-s}$ to obtain
\begin{equation*}
\sum_{k = 0}^n {( - 1)^k x^{k + s} \left( {1 - x} \right)^{r - k - s} }  = x^s \left( {1 - x} \right)^{r - s + 1}  + ( - 1)^n x^{n + s + 1} \left( {1 - x} \right)^{r - n - s} ,
\end{equation*}
and hence~\eqref{eq.p9s6a2a}.
\end{proof}

\subsection{An identity derived from Waring's formula}

Waring's formula is \cite[Equation (22)]{gould99}
\begin{equation}\label{waring}
\sum_{k = 0}^{\left\lfloor {n/2} \right\rfloor } {( - 1)^k \frac{n}{{n - k}}\binom {n-k}k(xy)^k (x + y)^{n - 2k} }  = x^{n}  + y^{n},
\end{equation}
and holds for a positive integer $n$ and complex variables $x$ and $y$.
\begin{proposition}
If $n$ is a non-negative integer and $r$ and $s$ are complex numbers excluding the set of negative integers and such that $\Re(r-s+1)>0$, then
\begin{equation}\label{eq.ty5mw9n}
\sum_{k = 0}^{\left\lfloor {n/2} \right\rfloor } {\frac{{( - 1)^k n}}{{\left( {n - k} \right)\left( {k + s} \right)}}\binom{{n - k}}{k}\binom{{2k + r}}{{k + s}}^{-1}}  = \frac{1}{s}\binom{{n + r}}{s}^{ - 1}  + \frac{1}{{n + s}}\binom{{n + r}}{{n + s}}^{-1}.
\end{equation}
\end{proposition}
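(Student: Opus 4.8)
The plan is to follow the pattern already established by the proofs of identities like~\eqref{eq.xt2yfl7}: start from Waring's formula~\eqref{waring} as a polynomial identity in one variable, bring it into the canonical form~\eqref{poly}, read off the data $f(k)$, $g(k)$, $p(k)$, $q(k)$, and then invoke Theorem~\ref{thm.frisch}. First I would specialize~\eqref{waring} by setting $y=1$, which yields
\begin{equation*}
\sum_{k=0}^{\lfloor n/2\rfloor}{(-1)^k\frac{n}{n-k}\binom{n-k}{k}x^k(x+1)^{n-2k}}=x^n+1.
\end{equation*}
This is not yet of the form~\eqref{poly} because the left side mixes powers of $x$ and $(1+x)$, whereas~\eqref{poly} wants one side in powers of $x$ and the other in powers of $1-x$. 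The natural fix is to substitute $x\mapsto -x$, turning $x+1$ into $1-x$ and $x^n$ into $(-1)^nx^n$; after rearranging so that pure powers of $x$ sit on one side and pure powers of $1-x$ on the other, I expect
\begin{equation*}
\sum_{k=0}^{n}{\delta_{k,n}x^n}=\text{(something)}=\sum_{k=0}^{\lfloor n/2\rfloor}{\frac{n}{n-k}\binom{n-k}{k}(-1)^{n-k}x^k(1-x)^{n-2k}}\mp 1,
\end{equation*}
so that one side is a single monomial $x^n$ (or a constant) and the other is the Waring sum in powers of $x$ times powers of $1-x$; the constant term $1$ on the right is handled as the $k$-term with $q(k)=0$.

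The cleaner route, matching the shape of~\eqref{poly} exactly, is: after $x\mapsto -x$, write Waring's identity with $y=1$ as
\begin{equation*}
x^n+(-1)^n\cdot(-1)^n=\sum_{k=0}^{\lfloor n/2\rfloor}{(-1)^k\frac{n}{n-k}\binom{n-k}{k}(-x)^k(1-x)^{n-2k}},
\end{equation*}
i.e.\ the left-hand side becomes $x^n+1$ times an appropriate sign, which I then move so that the ``pure $x$'' side is $\{x^n,\,x^0\}$ and the ``pure $(1-x)$'' side is the Waring sum. Concretely, I would take
\begin{equation*}
f(k):\quad f(n)=1,\ f(0)=-1,\ f(k)=0\ \text{otherwise};\qquad p(n)=n,\ p(0)=0,
\end{equation*}
(so that $\sum f(k)x^{p(k)}=x^n-1$), and on the other side
\begin{equation*}
g(k)=(-1)^{n-k}\frac{n}{n-k}\binom{n-k}{k},\qquad q(k)=n-2k,\qquad k=0,\dots,\lfloor n/2\rfloor,
\end{equation*}
possibly after checking a global sign by expanding $(1-x)=-(x-1)$ carefully; I would verify the bookkeeping by testing $n=1$ and $n=2$ numerically. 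Then~\eqref{eq.yndi3o2} of Theorem~\ref{thm.frisch} reads
\begin{equation*}
\binom{n+r}{s}^{-1}-\binom{r}{s}^{-1}=s\sum_{k=0}^{\lfloor n/2\rfloor}{\frac{g(k)}{n-2k+s}\binom{n-2k+r}{n-2k+s}^{-1}},
\end{equation*}
and the term $f(0)$ contributes $-\binom{r}{s}^{-1}$ via $p(0)=0$. Comparing with the claimed~\eqref{eq.ty5mw9n} — whose right side is $\tfrac1s\binom{n+r}{s}^{-1}+\tfrac1{n+s}\binom{n+r}{n+s}^{-1}$ — I expect that after moving the $\binom{r}{s}^{-1}$ term and reindexing (the Waring sum's $q(k)=n-2k$ must be matched against the exponents $k+r$, $k+s$ appearing in~\eqref{eq.ty5mw9n}, so a substitution $k\mapsto$ something or a use of the $p\leftrightarrow q$ transposition~\eqref{eq.at6nk8c} will be needed) everything lines up, with the two terms on the right of~\eqref{eq.ty5mw9n} coming respectively from the single-monomial side and from one extreme term of the Waring sum.

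The main obstacle I anticipate is the bookkeeping of signs and the correct assignment of which side of~\eqref{poly} plays the role of ``$f$'' versus ``$g$,'' together with the reindexing needed to turn $q(k)=n-2k$ into the $k+s$, $k+r$ pattern of~\eqref{eq.ty5mw9n} — in particular, deciding whether to apply~\eqref{eq.yndi3o2} or~\eqref{eq.at6nk8c}, and whether an extra substitution $x\mapsto 1-x$ or $x\mapsto -x$ is needed before applying the theorem so that the monomial side is $x^n$ (giving a $\binom{n+r}{s}^{-1}$ term) rather than $x^0$. I would pin this down by first writing out the $\Re(r-s+1)>0$ hypothesis check (the exponents $p(k)=0,n$ and $q(k)=n-2k\ge0$ give $\min p=0$, so the condition $\Re(r-s+1)>0$ suffices, matching the proposition's hypotheses, and $s\notin\mathbb{Z}^{-}$ plus $s\ne0$ is what ``$s$ excluding the negative integers'' together with the explicit $1/s$ presupposes), and then doing the $n=1,2$ sanity checks before committing to the general rearrangement. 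Once the assignment is fixed, the remainder is a direct substitution into Theorem~\ref{thm.frisch} followed by elementary simplification, exactly as in the Simons example~\eqref{eq.xt2yfl7}.
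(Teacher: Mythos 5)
There is a genuine gap here, and it is fatal to the plan rather than mere bookkeeping. After setting $y=1$ and replacing $x$ by $-x$, Waring's formula becomes
\begin{equation*}
\sum_{k = 0}^{\left\lfloor {n/2} \right\rfloor } {\frac{n}{{n - k}}\binom{{n - k}}{k}\,x^k \left( {1 - x} \right)^{n - 2k} }  = ( - 1)^n x^n  + 1,
\end{equation*}
and each term of the sum is a \emph{mixed} product $x^k(1-x)^{n-2k}$. This is not of the form~\eqref{poly}, which requires every term on one side to be a pure power of $x$ and every term on the other to be a pure power of $1-x$; your identification $g(k)=(-1)^{n-k}\frac{n}{n-k}\binom{n-k}{k}$, $q(k)=n-2k$ silently discards the factor $x^k$. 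For $n=1$ the only term has $k=0$ and the error is invisible, but already for $n=2$ the $k=1$ term is $2x$, not a constant, and the identity you would extract from Theorem~\ref{thm.frisch} is false (test $r=1$, $s=1$). Moreover, even if you integrate the mixed terms honestly against $x^{r-s}(1-x)^{s-1}$, you obtain factors $\frac{1}{n-2k+s}\binom{n-k+r}{n-2k+s}^{-1}$, which is a different identity from the target: the structure $\frac{1}{k+s}\binom{2k+r}{k+s}^{-1}$ in~\eqref{eq.ty5mw9n} can only come from integrating $x^{k+r-s}(1-x)^{k+s-1}$, i.e., from terms of the shape $x^k(1-x)^k$. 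Likewise the term $\frac{1}{n+s}\binom{n+r}{n+s}^{-1}$ on the right of~\eqref{eq.ty5mw9n} must come from integrating $(1-x)^n\cdot x^{r-s}(1-x)^{s-1}$, whereas your specialization $y=1$ reduces $y^n$ to $1$ and loses it.

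The missing idea is the substitution $y=1-x$ (not $y=1$). Then $x+y=1$ kills the factor $(x+y)^{n-2k}$, each term of the Waring sum becomes $(x(1-x))^k=x^k(1-x)^k$, and the right-hand side becomes $x^n+(1-x)^n$. Multiplying through by $x^{r-s}(1-x)^{s-1}$ and integrating term by term over $[0,1]$ using~\eqref{int1a} and~\eqref{int1b} (with $r$ replaced by $k+r$ on the left) gives~\eqref{eq.ty5mw9n} directly; no appeal to Theorem~\ref{thm.frisch} is needed or possible, since the intermediate identity is genuinely outside the class~\eqref{poly}.
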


\begin{proof}
Set $y=1-x$ in~\eqref{waring} and multiply through by $x^{r-s}(1-x)^{s-1}$ to obtain
\begin{align*}
\sum_{k = 0}^{\left\lfloor {n/2} \right\rfloor } {( - 1)^k \frac{n}{{n - k}}\binom{{n - k}}{k}x^{k + r - s} \left( {1 - x} \right)^{k + s - 1} }  = x^{n + r - s} \left( {1 - x} \right)^{s - 1}  + x^{r - s} \left( {1 - x} \right)^{n + s - 1} ,
\end{align*}
from which~\eqref{eq.ty5mw9n} follows.
\end{proof}

\subsection{More identities from an identity of Simons}

\begin{proposition}
If $m$ and $n$ are non-negative integers, then
\begin{align}
\sum_{k = 0}^n {( - 1)^{n-k} k^m \binom{{n}}{k}\binom{{k + n}}{k}}  &= \sum_{k = 0}^m {k! \binom{{n}}{k}\binom{{k + n}}{k}\braces mk },\label{eq.sn564p1} \\
\sum_{k = 0}^n {( - 1)^k k^m \binom{{n}}{k}\binom{{2n - k}}{{n - k}}}  &= \sum_{k = 0}^m {(-1)^kk!\binom{{n}}{k}\binom{{k + n}}{k}\braces{m+n-k}n_{n-k}  }\label{eq.koo68xi} .
\end{align}
\end{proposition}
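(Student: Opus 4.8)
The plan is to recognize this Proposition as a direct application of Theorem~\ref{thm.an0mkzw} to the polynomial identity of Simons~\eqref{simons}. Comparing~\eqref{simons} with the template~\eqref{eq.xtlkg2f}, I would take
\[
f(k) = (-1)^k\binom nk\binom{n+k}k, \qquad g(k) = (-1)^{n-k}\binom nk\binom{n+k}k,
\]
noting that here $f$ and $g$ differ only by the global sign $(-1)^n$, which is the structural reason the two asserted identities look so symmetric.

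For~\eqref{eq.sn564p1}, observe that the left-hand side is exactly $\sum_{k=0}^n k^m g(k)$. Applying~\eqref{eq.s3sdwv2} gives $\sum_{k=0}^n k^m g(k) = \sum_{k=0}^m (-1)^k k!\braces mk f(k)$; since $f(k)$ itself carries the factor $(-1)^k$, the two sign factors cancel and what remains is precisely $\sum_{k=0}^m k!\binom nk\binom{k+n}k\braces mk$, the right-hand side of~\eqref{eq.sn564p1}. For~\eqref{eq.koo68xi}, I would first simplify the reflected coefficient using $\binom n{n-k}=\binom nk$, obtaining $g(n-k) = (-1)^{n-(n-k)}\binom n{n-k}\binom{n+(n-k)}{n-k} = (-1)^k\binom nk\binom{2n-k}{n-k}$, so that the left-hand side of~\eqref{eq.koo68xi} equals $\sum_{k=0}^n k^m g(n-k)$. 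Then~\eqref{eq.ik85aw8} yields $\sum_{k=0}^n k^m g(n-k) = \sum_{k=0}^m k!\braces{n+m-k}n_{n-k} f(k)$, and substituting $f(k)=(-1)^k\binom nk\binom{n+k}k$ produces exactly $\sum_{k=0}^m (-1)^k k!\binom nk\binom{k+n}k\braces{m+n-k}n_{n-k}$, as claimed.

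There is no substantial obstacle here: the entire content is carried by Theorem~\ref{thm.an0mkzw}, and the proof is a short verification. The only points requiring a little care are the bookkeeping ones — simplifying the reflected binomial coefficient $\binom n{n-k}$, checking that by working with the $g$-forms (rather than the $f$-forms) no stray factor of $(-1)^n$ survives, and remarking that the upper limit of the Stirling-number sums may be taken to be $m$ because $\braces mk=0$ and $\braces{n+m-k}n_{n-k}=0$ whenever $k>m$. If one prefers, the same two identities can equally well be obtained from~\eqref{eq.rkgu74f} and~\eqref{eq.aft8sl8} after multiplying through by $(-1)^n$, which makes the $f\leftrightarrow g$ interchange explicit.
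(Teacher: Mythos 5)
Your proposal is correct and follows essentially the same route as the paper: apply Theorem~\ref{thm.an0mkzw} with the $f(k)$, $g(k)$ read off from Simons' identity~\eqref{simons}. The only cosmetic difference is that you invoke~\eqref{eq.s3sdwv2} and~\eqref{eq.ik85aw8} where the paper cites~\eqref{eq.rkgu74f} and~\eqref{eq.aft8sl8}; since $f$ and $g$ here differ only by the factor $(-1)^n$, the two choices are interchangeable, as you yourself observe.
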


\begin{proof}
Ue $f(k)$ and $g(k)$ from~\eqref{simons-p} in identities~\eqref{eq.rkgu74f} and~\eqref{eq.aft8sl8} of Theorem~\ref{thm.an0mkzw}.
\end{proof}
Explicit examples from~\eqref{eq.sn564p1} include
\begin{align}
\sum_{k = 0}^n {( - 1)^k k\binom{{n}}{k}\binom{{k + n}}{k}}  &= ( - 1)^n n\left( {n + 1} \right),\\
\sum_{k = 0}^n {( - 1)^k k^2 \binom{{n}}{k}\binom{{k + n}}{k}}  &= ( - 1)^n \frac{{n^2 \left( {n + 1} \right)^2 }}{2},
\end{align}
and
\begin{equation}
\sum_{k = 0}^n {( - 1)^k k^3 \binom{{n}}{k}\binom{{k + n}}{k}}  = ( - 1)^n \frac{{n^2 \left( {n + 1} \right)^2 \left( {n^2  + n + 1} \right)}}{6};
\end{equation}
while examples from~\eqref{eq.koo68xi} include
\begin{align}
\sum_{k = 0}^n {( - 1)^k k\binom{{n}}{k}\binom{{2n - k}}{{n - k}}}  &=  - n^2 ,\\
\sum_{k = 0}^n {( - 1)^k k^2 \binom{{n}}{k}\binom{{2n - k}}{{n - k}}}  &= \frac{{n^2 \left( {n^2  - 2n - 1} \right)}}{2},
\end{align}
and
\begin{equation}
\sum_{k = 0}^n {( - 1)^k k^3 \binom{{n}}{k}\binom{{2n - k}}{{n - k}}}  =  - \frac{{n^2 \left( {4n^2  - 6n^3  + 6n + 1 + n^4 } \right)}}{6}.
\end{equation}

\subsection{Combinatorial identities from an identity of MacMahon}
MacMahon's identity is (Gould~\cite[Identity (6.7)]{gould}, also Sun~\cite{sun12})
\begin{equation}\label{macmahon}
\sum_{k = 0}^n {( - 1)^k \binom{{n}}{k}^3 x^k }  = \sum_{k = 0}^n {( - 1)^k \binom{{n + k}}{{2k}}\binom{{2k}}{k}\binom{{n - k}}{k}x^k \left( {1 - x} \right)^{n - 2k} } .
\end{equation}

\begin{proposition}
If $n$ is a non-negative integer and $r$ and $s$ are complex numbers such that $\Re(r - s + 1)>0$ and $s$ is not a non-positive integer, then
\begin{align}
&\sum_{k = 0}^n {( - 1)^k \binom{{n}}{k}^3 \binom{{k + r}}{s}^{ - 1} }\nonumber\\
&\qquad  = \sum_{k = 0}^n {\frac{{( - 1)^k s}}{{n - 2k + s}}\binom{n + k}{2k}\binom{{2k}}{k}\binom{{n - k}}{k}\binom{{n + r - k}}{{n - 2k + s}}^{-1}} .
\end{align}

\end{proposition}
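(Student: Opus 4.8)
The plan is to follow the same recipe used to prove Theorem~\ref{thm.frisch} and, more directly, the Waring-type proposition above: multiply a known polynomial identity through by the weight $x^{r-s}(1-x)^{s-1}$ and integrate term by term over $[0,1]$, reading off the reciprocal binomial coefficients from Lemma~\ref{integrals}. The one thing to keep in mind is that MacMahon's identity~\eqref{macmahon} is \emph{not} of the form~\eqref{poly}: its right-hand side carries the mixed factor $x^k(1-x)^{n-2k}$, so Theorem~\ref{thm.frisch} cannot be invoked verbatim and the integration must be carried out by hand, exactly as was done for Waring's formula.

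Concretely, first I would multiply both sides of~\eqref{macmahon} by $x^{r-s}(1-x)^{s-1}$, obtaining
\begin{align*}
&\sum_{k=0}^n (-1)^k\binom nk^3 x^{k+r-s}(1-x)^{s-1}\\
&\qquad = \sum_{k=0}^n (-1)^k\binom{n+k}{2k}\binom{2k}{k}\binom{n-k}{k}\,x^{k+r-s}(1-x)^{n-2k+s-1}.
\end{align*}
Integrating term by term on $[0,1]$, the left-hand integrals are $\int_0^1 x^{k+r-s}(1-x)^{s-1}\,dx=\frac1s\binom{k+r}s^{-1}$ by~\eqref{int1a}. For the right-hand integrals I would recognise $\int_0^1 x^{k+r-s}(1-x)^{n-2k+s-1}\,dx$ as the instance of~\eqref{int1b} obtained by relabelling $r\mapsto k+r$ and the dummy index $k\mapsto n-2k$ (with $s$ unchanged), which gives $\frac{1}{n-2k+s}\binom{n-k+r}{n-2k+s}^{-1}$. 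Multiplying the resulting identity through by $s$ then produces precisely the claimed formula, since $\binom{n-k+r}{n-2k+s}=\binom{n+r-k}{n-2k+s}$.

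The computation is entirely routine, so there is no genuine obstacle; the only points needing care are bookkeeping ones. Term-by-term integration is legitimate because the sums are finite, and the hypotheses of Lemma~\ref{integrals} are met in the usual way: $\Re(r-s+1)>0$ forces $\Re(k+r-s+1)>0$ for every $k\ge 0$, covering~\eqref{int1a}, while the factor $\binom{n-k}{k}$ annihilates every term with $k>\lfloor n/2\rfloor$, so that the surviving right-hand terms all have $n-2k\ge 0$. Consequently $n-2k+s\ne 0$ (as $s$ is not a non-positive integer), the denominators in the statement are harmless, and the convergence/analytic-continuation considerations for~\eqref{int1b} behave exactly as in the proof of~\eqref{poly1}. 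Assembling these observations completes the argument. The same device, applied after the substitution $x\mapsto 1/x$ and integration against $x^s(1-x)^{r-n-s}$, would yield a Klamkin-type companion of MacMahon's identity.
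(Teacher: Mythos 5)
Your proposal is correct and is exactly the paper's proof: the author also multiplies MacMahon's identity~\eqref{macmahon} by $x^{r-s}(1-x)^{s-1}$ and integrates term by term over $[0,1]$ using Lemma~\ref{integrals}. Your additional bookkeeping (the relabelled instance of~\eqref{int1b} giving $\tfrac{1}{n-2k+s}\binom{n+r-k}{n-2k+s}^{-1}$, and the observation that $\binom{n-k}{k}$ kills the terms with $n-2k<0$) is accurate and merely fills in details the paper leaves implicit.
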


\begin{proof}
Multiply through~\eqref{macmahon} by $x^{r-s}(1-x)^{s-1}$ and integrate from $0$ to $1$.
\end{proof}

\begin{proposition}
If $n$ is a non-negative integer and $m$ is a positive integer, then
\begin{align}\label{eq.l1fpqub}
&\sum_{k = 0}^{2n} {( - 1)^k k^m \binom{{2n}}{k}^3 }\nonumber\\
&\qquad  = \sum_{k = n - m + 1}^n {( - 1)^k \binom{{2n + k}}{{2k}}\binom{{2k}}{k}\binom{{2n - k}}{k}\sum_{p = 0}^{2\left( {n - k} \right)} {( - 1)^p \binom{{2\left( {n - k} \right)}}{p}\left( {k + p} \right)^m } } .
\end{align}
\end{proposition}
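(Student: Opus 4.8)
The plan is to follow the pattern of the proof of Proposition~\ref{prop.dfd96f9}: substitute an exponential for the variable in MacMahon's identity~\eqref{macmahon}, differentiate $m$ times, and evaluate at the origin, using Lemma~\ref{m-derivatives} to dispose of the derivatives of the mixed terms $e^{kx}(1-e^x)^{2(n-k)}$ on the right. Note that MacMahon's identity is not of the special form~\eqref{eq.xtlkg2f}, so Theorem~\ref{thm.an0mkzw} does not apply directly; the exponential-substitution argument is therefore carried out by hand.

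First I would write~\eqref{macmahon} with $n$ replaced by $2n$, i.e.
\[
\sum_{k=0}^{2n} (-1)^k \binom{2n}{k}^3 x^k = \sum_{k=0}^{2n} (-1)^k \binom{2n+k}{2k}\binom{2k}{k}\binom{2n-k}{k}\, x^k (1-x)^{2(n-k)},
\]
observing that $\binom{2n-k}{k}=0$ for $k>n$, so the right-hand sum effectively runs over $0\le k\le n$ only, with $2(n-k)\ge 0$ throughout. Then I would write $e^x$ for $x$, differentiate $m$ times with respect to $x$, and set $x=0$. On the left this gives $\sum_{k=0}^{2n}(-1)^k k^m\binom{2n}{k}^3$. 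On the right, each summand calls for $\left.\frac{d^m}{dx^m}\bigl(e^{kx}(1-e^x)^{2(n-k)}\bigr)\right|_{x=0}$, which is supplied by~\eqref{eq.vompp34} of Lemma~\ref{m-derivatives} with $u=2(n-k)$ and $v=k$ (both non-negative), equal to $\sum_{p=0}^{2(n-k)}(-1)^p\binom{2(n-k)}{p}(k+p)^m$. This produces the asserted identity, except that the outer right-hand sum still runs over $0\le k\le n$.

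It remains to shrink the range to $n-m+1\le k\le n$. The inner sum $\sum_{p=0}^{2(n-k)}(-1)^p\binom{2(n-k)}{p}(k+p)^m$ is, up to sign, the $2(n-k)$-th forward difference of the degree-$m$ polynomial $p\mapsto(k+p)^m$, hence vanishes as soon as $2(n-k)>m$; equivalently, by the second equality of~\eqref{eq.vompp34} it equals $\bigl(2(n-k)\bigr)!\,\braces{m+k}{2n-k}_k$, which is $0$ once $2n-k>m+k$. Now if $k\le n-m$ then $n-k\ge m$, so $2(n-k)\ge 2m\ge m+1>m$, and the corresponding term is zero. Hence restricting the sum to $k\ge n-m+1$ changes nothing (any terms with $k<0$, which arise only when $m>n+1$, being zero by the binomial coefficient convention), and this is exactly~\eqref{eq.l1fpqub}.

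I expect the one point needing care to be this truncation of the index range — verifying that $n-m+1$ is a valid lower limit, i.e. that no nonzero term is discarded — since the rest is a verbatim reprise of the manipulation used for Proposition~\ref{prop.dfd96f9}, the $m$-fold differentiation passing termwise through a finite sum without incident.
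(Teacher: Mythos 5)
Your proof is correct and follows essentially the same route as the paper: replace $n$ by $2n$ in MacMahon's identity, substitute $e^x$ for $x$, differentiate $m$ times, evaluate at $x=0$, and invoke Lemma~\ref{m-derivatives}. The one thing you add is the explicit justification that the outer sum truncates to $n-m+1\le k\le n$ (via the vanishing of the $2(n-k)$-th difference of a degree-$m$ polynomial when $2(n-k)>m$), a detail the paper leaves implicit.
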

In particular,
\begin{equation}
\sum_{k = 0}^{2n} {( - 1)^k k\binom{{2n}}{k}^3 }  = ( - 1)^n n\binom{{2n}}{n}\binom{{3n}}{n}
\end{equation}
and
\begin{equation}
\sum_{k = 0}^{2n} {( - 1)^k k^2\binom{{2n}}{k}^3 }  = ( - 1)^n\frac23 n^2\binom{{2n}}{n}\binom{{3n}}{n}.
\end{equation}

\begin{proof}
Write $2n$ for $n$ in~\eqref{macmahon} to obtain
\begin{equation}\label{macmahon2}
\sum_{k = 0}^{2n} {( - 1)^k \binom{{2n}}{k}^3 x^k }  = \sum_{k = 0}^{2n} {( - 1)^k \binom{{2n + k}}{{2k}}\binom{{2k}}{k}\binom{{2n - k}}{k}x^k \left( {1 - x} \right)^{2(n - k)} } ,
\end{equation}
from which~\eqref{eq.l1fpqub} follows after replacing $x$ with $\exp x$, differentiating $m$ times with respect to $x$ and evaluating at $x=0$.
\end{proof}

\begin{remark}
Setting $x=1$ in~\eqref{macmahon} gives
\begin{equation}\label{dixon}
\sum_{k = 0}^n {( - 1)^k \binom{{n}}{k}^3 }  
= \begin{cases}
 ( - 1)^{n/2} \binom{{n}}{{n/2}}\binom{{3n/2}}{n},&\text{if $n$ is even;} \\ 
 0,&\text{if $n$ is odd;}\\ 
 \end{cases} 
\end{equation}
which subsumes
\begin{equation}
\sum_{k = 0}^{2n} {( - 1)^k \binom{{2n}}{k}^3 }  = ( - 1)^n \binom{{2n}}{n}\binom{{3n}}{n}.
\end{equation}
Identity~\eqref{dixon} is Dixon's identity~\cite{dixon03} in its original form~\cite{ward91}.
\end{remark}

\bigskip
\hrule
\bigskip

\end{document}